\newtheorem{algorithm}[theorem]{Algorithm}
\begin{document}

\markboth{Y. Li \& J. W. Siegel}{Entropy-based convergence rates of greedy algorithms}

%
\catchline{}{}{}{}{}
%

\title{Entropy-based convergence rates of greedy algorithms}

\author{Yuwen Li\footnote{Corresponding author.}}

\address{School of Mathematical Sciences, Zhejiang University, \\
Hangzhou, Zhejiang 310058, China\\
liyuwen@zju.edu.cn}

\author{Jonathan W. Siegel}

\address{Department of Mathematics, Texas A\&M University,\\
College Station, TX 77845, USA\\
jwsiegel@tamu.edu}

\maketitle

\begin{history}
\received{29 April 2023}
\revised{13 November 2023}
\accepted{30 December 2023}
\end{history}

\begin{abstract}
We present convergence estimates of two types of greedy algorithms in terms of the {entropy numbers} of underlying compact sets. In the first part, we measure the error of a standard greedy reduced basis method for parametric PDEs by the entropy numbers of the solution manifold in Banach spaces. This contrasts with the classical analysis based on the Kolmogorov $n$-widths and enables us to obtain direct comparisons between the algorithm error and the entropy numbers, where the multiplicative constants are explicit and simple. The entropy-based convergence estimate is sharp and improves upon the classical width-based analysis of reduced basis methods for elliptic model problems. In the second part, we derive a novel and simple convergence analysis of the classical orthogonal greedy algorithm for nonlinear dictionary approximation using the entropy numbers of the symmetric convex hull of the dictionary. This also improves upon existing results by giving a direct comparison between the algorithm error and the entropy numbers.
\end{abstract}

\keywords{Reduced basis method; orthogonal greedy algorithm; nonlinear approximation; entropy numbers;  Kolmogorov n-width; symmetric convex hull.}

\ccode{AMS Subject Classification: 41A25, 41A46, 41A65, 65M12, 65N15}

\maketitle
\section{Introduction}
Greedy algorithms are ubiquitous in advanced scientific computing and computational mathematics. Successful greedy-type numerical algorithms include adaptive finite element \cite{BabuskaRheinboldt1978,CKNS2008,MorinSiebertVeeser2008,Li2019SINUM,Li2021MCOM} and wavelet  \cite{CohenDahmenDeVore2001,Gantumur2007} methods for boundary value problems, certified model reduction technique for parametric differential equations \cite{BarraultMadayNguyen2004,HesthavenRozzaStamm2016,Quarteroni2016}, and nonlinear dictionary approximation \cite{Jones1992,DeVoreTemlyakov1996,Friedman2001,BarronCohenDahmenDeVore2008,Temlyakov2008,JonathanXu2022} in signal processing, machine learning and statistics. The aforementioned algorithms use local extreme criteria and iterations to search a global quasi-optimizer. It is known that many greedy algorithms have guaranteed error controls, solid convergence analysis, and optimal computational complexity as advantages. In this paper, we are focused on two applications of greedy algorithms, one for the \emph{Reduced Basis Method} (RBM) and one for approximating functions. We shall show that these two types of greedy algorithms originally designed for seemingly unrelated problems can be analyzed in a unified way.

The RBM is a class of popular numerical methods developed in recent decades for efficiently solving  \emph{parametric} PDEs \cite{Maday2006,CohenDeVore2015,HesthavenRozzaStamm2016,Quarteroni2016}.  In a Banach space $X$, let $\mathcal{P}(u_\mu,\mu)=0$ be the model problem with the solution $u_\mu\in X$, where  $\mu$ is a parameter in a compact set $\mathcal{U}$. A standard RBM is split into two stages. In the \emph{offline} stage, the RBM builds a highly accurate finite-dimensional subspace $X_n={\rm span}\{u_{\mu_1},\ldots,u_{\mu_n}\}\subset X$ to uniformly approximate the compact solution manifold \[
\mathcal{M}=u(\mathcal{U})=\{u_\mu\in X: \mu\in\mathcal{U}\}.
\]
The offline construction of $X_n$ is potentially very expensive but is only implemented once during the offline setup stage of the RBM. Then the  \emph{online} module of the RBM is able to rapidly produce  accurate Galerkin approximations to  $u_\mu$ based on $X_n$ for many instances of the parameter $\mu$. 

There are two main types of computational methods for constructing $X_n$. One is called the Proper Orthogonal Decomposition (POD) \cite{BennerGugercinWillcox2015}, which utilizes a (costly) singular value decomposition to extract the low-rank structure of a given high-quality dataset. The other, is an iterative greedy algorithm for building the RBM subspace $X_n$, which relies on a posteriori error estimates. Compared with POD, greedy algorithms are computationally more feasible and have certified error bounds and convergence rates. The work \cite{MadayPateraTurinici2002} provided a priori convergence rates of greedy RBMs for single-parameter problems. In general, the convergence of greedy algorithms for the RBM is given by comparing the numerical error of the algorithm $\sigma_n(\mathcal{M})$ against the Kolmogorov $n$-width $d_n(\mathcal{M})$, which measures the performance of the best possible $n$-dimensional subspace for approximating the solution manifold $\mathcal{M}$. A direct comparison between $\sigma_n(\mathcal{M})$ and $d_n(\mathcal{M})$ was developed in Ref.~\citen{BuffaMaday2012}. Later on, the works \cite{BinevCohenDahmenDeVore2011,DeVorePetrova2013} showed for the first time that 
\begin{equation}\label{rateRBMthm}
    d_n(\mathcal{M})=\mathcal{O}(n^{-s})\Longrightarrow\sigma_n(\mathcal{M})=\mathcal{O}(n^{-s}),
\end{equation}
where $s>0$ and $\mathcal{O}$ is Landau's big O notation. In other words, greedy-type RBMs are rate-optimal for polynomial decaying $d_n(\mathcal{M})$.

As far as we know, convergence results for the greedy RBM in the literature are all based on comparison with the Kolmogorov $n$-width. In this paper, we derive convergence analysis of greedy-type RBMs in terms of $\varepsilon_n=\varepsilon_n({\rm co}(\mathcal{M}))$, the entropy number \cite{KolmogorovTihomirov1961} of the symmetric convex hull ${\rm co}(\mathcal{M})$. Just as the $n$-width $d_n=d_n(\mathcal{M})$, $\varepsilon_n$ is also an asymptotically small quantity for compact sets (see Refs.~\citen{Carl1981} and \citen{Lorentz1996}).
Unlike the rate-optimality result \eqref{rateRBMthm}, our analysis leads to  direct and optimal comparisons of the type 
\begin{equation}\label{sigmaepsilon}
\sigma_n(\mathcal{M})\leq Cn^t\varepsilon_n({\rm co}(\mathcal{M})),\quad t\in \left[\frac{1}{2},1\right],   
\end{equation}
where $C$ is an explicit constant and the exponent $t$ depends upon the Banach space in which the error is measured. Specifically, $t = (1/2) + |1/2 - 1/p|$ for the spaces $L_p$. As $n\to\infty$  \eqref{sigmaepsilon} is sharper than the direct comparisons based on $d_n(\mathcal{M})$ given in Refs.~\citen{BuffaMaday2012} and \citen{DeVorePetrova2013}. Moreover, in the context of solving parametric PDEs, it is easier to calculate the entropy numbers than the Kolmogorov $n$-width of relevant sets, which is another  advantage of bounding $\sigma_n(\mathcal{M})$ in terms of $\varepsilon_n$, see Subsection \ref{subsecExamples} for more details.

On the other hand, there are a variety of greedy algorithms for constructing nonlinear approximations (see Ref.~\citen{DeVore1998}) of a single target function in the fields of machine learning, statistics and signal processing. The historically first of these algorithms is known as projection pursuit regression in statistics \cite{friedman1981projection,jones1987conjecture}, the matching pursuit algorithm in signal procession \cite{MallatZhang1993,DavisMallat1997}, and the Pure Greedy Algorithm (PGA) in approximation theory \cite{DeVoreTemlyakov1996}. Other variants of these greedy algorithms have been proposed which enjoy significantly better convergence guarantees for general dictionaries, including the Relaxed Greedy Algorithm (RGA) \cite{Jones1992,DeVoreTemlyakov1996,petrova2016rescaled,DereventsovTemlyakov2019} and the Orthogonal Greedy Algorithm (OGA) \cite{Pati1993,DeVoreTemlyakov1996,BarronCohenDahmenDeVore2008}. 
We refer to the article \cite{Temlyakov2008} and the book \cite{Temlyakov2011} for a thorough introduction to greedy algorithms for function approximation. 

In recent decades, greedy algorithms have also been used to solve PDEs, see \cite{figueroa2012greedy,le2009results,ammar2006new} and the recent work \cite{Siegel2023} which uses RGAs and OGAs to solve PDEs with neural networks. These algorithms adaptively select basis functions from a redundant dictionary $\mathbb{D}\subset X$ and use a sparse linear combination $$f_n=\sum_{i=1}^nc_ig_i,\quad g_i\in\mathbb{D}$$ of dictionary elements to approximate a target function $f$. Among these greedy algorithms, the RGA and OGA achieve the worst case optimal convergence rate $\mathcal{O}(n^{-1/2})$ for target functions in the convex hull of the dictionary $\mathbb{D}$ \cite{DeVoreTemlyakov1996,petrova2016rescaled}, while the PGA in general may perform worse \cite{livshits2009lower}. In Ref.~\citen{JonathanXu2022}, it is shown that for dictionaries whose convex hull has small entropy numbers, the orthogonal greedy algorithm may converge faster, specifically for $f\in{\rm co}(\mathbb{D})$ and $s>1/2$, 
\begin{equation}\label{ffnepsilonrate}
   \varepsilon_n({{\rm co}(\mathbb{D})})=\mathcal{O}(n^{-s})\Longrightarrow\|f-f_n\|_X=\mathcal{O}(n^{-s}).
\end{equation}
As mentioned before, the OGA converges with rate $\mathcal{O}(n^{-1/2})$ and no better for a general dictionary $\mathbb{D}$ (see Ref.~\citen{DeVoreTemlyakov1996}). The result \eqref{ffnepsilonrate} is an improved convergence rate for OGAs under additional assumptions, which hold for many popular dictionaries, e.g., the $\texttt{ReLU}_k$ dictionary in shallow neural networks (see Ref.~\citen{JonathanXuFoCM}). 

The second main contribution of our work is a novel direct comparison for the OGA:
\begin{equation}\label{ffnepsilon0}
    \|f-f_n\|_X\leq C_f\varepsilon_n({\rm co}(\mathbb{D})),
\end{equation}
where $C_f$ is an explicit uniform constant depending on the target function $f$. We remark that \eqref{ffnepsilonrate} is indeed a \emph{rate} comparison while our estimate  \eqref{ffnepsilon0} is \emph{direct} and implies \eqref{ffnepsilonrate} as special cases. In addition, the validity of \eqref{ffnepsilonrate} relies on the polynomial decay of  $\varepsilon_n({\rm co}(\mathbb{D}))$, while the estimate \eqref{ffnepsilon0} is non-asymptotic and is always valid for general dictionaries, e.g., when $\varepsilon_n({\rm co}(\mathbb{D}))=\mathcal{O}(e^{-\alpha n^s})$ is exponentially diminishing.
  
Throughout the rest of this section, we briefly introduce the Kolmogorov $n$-width, the entropy numbers, and the details of greedy algorithms for the RBM and function approximation.

\subsection{Greedy Algorithms for RBMs}
Let $K$ be a compact set in a Banach space $X$ equipped with the norm $\|\bullet\|=\|\bullet\|_X$. The Kolmogorov $n$-width of $K$ is defined as 
\begin{equation*}
d_n(K)=d_n(K)_X:=\inf_{X_n}\sup_{f\in K}\text{dist}(f,X_n),
\end{equation*}
where $${\rm dist}(f,X_n):=\inf_{g\in X_n}\|f-g\|$$ is the distance from $f$ to $X_n$, and the infimum is taken over all $n$-dimensional subspaces of $X$. 

On the other hand, {Kolmogorov proposed and investigated} the (dyadic) entropy numbers of $K$ (see Ref.~\citen{KolmogorovTihomirov1961}), which are given by \[
\varepsilon_n(K)=\varepsilon_n(K)_X:=\inf\{\varepsilon>0: K\text{ is covered by }2^n\text{ balls of radius } \varepsilon\}.
\]
It is known that $\{d_n(K)\}_{n\geq1}$ and $\{\varepsilon_n(K)\}_{n\geq1}$ are decreasing sequences and $\lim_{n\to\infty}d_n(K)=\lim_{n\to\infty}\varepsilon_n(K)=0$ for any compact set $K$ (see Ref.~\citen{Lorentz1996}). When $K$ is compact, the symmetric or absolutely convex hull of $K$,
\[
\text{co}(K):=\overline{\Big\{\sum_{i}c_ig_i:   \sum_{i}|c_i|\leq1,~g_i\in K\text{ for each }i\Big\}},
\]
is also compact.
The key quantity used in our convergence analysis is the entropy number $\varepsilon_n({\rm co}(K))$, while in traditional analyses the Kolmogorov $n$-widths $d_n(K)$ are typically used.

In the offline stage, the  greedy algorithm for RBMs selects a sequence $\{f_n\}_{n\geq1}$  from $K$ in an adaptive way as follows.
\begin{algorithm}[Greedy Algorithm for RBMs]\label{GA}
Set $n=1$ and $X_0=\{0\}$. 
\begin{itemize}
\item[]\textsf{Step 1}: Compute $$f_n=\arg\max_{f\in K}{\rm dist}(f,X_{n-1}).$$
\item[]\textsf{Step 2}: Set $X_n={\rm span}\{f_1,\ldots,f_n\}$. Set $n=n+1$. Go to \textsf{Step 1}.
\end{itemize}
\end{algorithm}
In practice, $K$ is often the solution manifold $\mathcal{M}$ of a parametric PDE corresponding to a range of varying parameters, and 
the computational cost of the exact  greedy approach (Algorithm \ref{GA}) might still be exceedingly high. In such cases, a weak greedy algorithm driven by a posterior error estimation is able to relax the \verb|argmax| criterion in Algorithm \ref{GA} and to realize the next more feasible algorithm. This modification is analogous to the weak greedy algorithms for function approximation \cite{Temlyakov2000}, which we discuss in more detail in  Subsection \ref{subsecOGA}.
\begin{algorithm}[Weak Greedy Algorithm for RBMs]\label{WGA}
Set $n=1$, $\gamma\in(0,1]$ and $X_0=\{0\}$. 
\begin{itemize}
\item[]\textsf{Step 1}: Select $f_n\in K$ such that $${\rm dist}(f_n,X_{n-1})\geq\gamma\max_{f\in K}{\rm dist}(f,X_{n-1}).$$
\item[]\textsf{Step 2}: Set $X_n={\rm span}\{f_1,\ldots,f_n\}$. Set $n=n+1$. Go to \textsf{Step 1}.
\end{itemize}
\end{algorithm}
We remark that \textsf{Step 1} of Algorithm \ref{WGA} is not directly implemented but practically achieved by an a posteriori error estimation \cite{RozzaHuynhPatera2008,HesthavenRozzaStamm2016,EdelMaday2023}. The constant
$\gamma\in(0,1]$ actually depends on the quality of a posteriori error estimators. The numerical error of Algorithm \ref{WGA} is measured by 
\[
\sigma_n(K)=\sigma_n(K)_X:=\sup_{f\in K}{\rm dist}(f,X_n).
\]  
By definition the simple bound \begin{equation}\label{dnsigman}
d_n(K)\leq\sigma_n(K)    
\end{equation}
is always true. We note that Algorithm \ref{WGA} with $\gamma=1$ reduces to Algorithm \ref{GA} and the iterates $\{f_n\}_{n\geq1}$ are not uniquely determined by the above greedy selection processes.
When $X$ is a Hilbert space, let $P_U$ be the orthogonal projection onto a subspace $U\subset X$. In this case we have
\begin{align*}
   &{\rm dist}(f,{X_{n}})=\|f-P_{X_{n}}f\|,\\
   &\sigma_n(K)=\max_{f\in K}\|f-P_{X_{n}}f\|. 
\end{align*}

\subsection{Greedy Algorithms for Approximating Functions}\label{subsecOGA}
Let the dictionary $\mathbb{D}$ be a collection of elements in a real Hilbert space $X$ equipped with inner product $\langle\cdot,\cdot\rangle$. For learning algorithms based upon wavelets, shallow neural networks, statistical regression or compressed sensing, it is often essential to construct nonlinear approximants to 
a target function $f\in X$. To achieve this goal, Refs.~\citen{friedman1981projection} and \citen{MallatZhang1993} developed the projection pursuit or matching pursuit algorithm: 
    \begin{align*}
&g_n=\arg\max_{g\in\mathbb{D}}|\langle g,f-f_{n-1}\rangle|,\\ &f_n=f_{n-1}+\langle f-f_{n-1},g_n\rangle g_n, 
    \end{align*}
where $f_0=0$ and $n=1,2,\ldots$ is the iteration counter. Note that if the dictionary elements are normalized in $X$, then this algorithm is adding the single term which minimizes the error in each step and validates the name ``greedy algorithm." The theoretical analysis of this algorithm is notoriously complex \cite{jones1987conjecture,DeVoreTemlyakov1996,sil2004rate}, and in any case it does not achieve an optimal rate of convergence \cite{livshits2009lower}. As an alternative, a variety of different greedy algorithms which achieve better convergence behavior have been introduced, for example the following RGA \cite{Jones1992,DeVoreTemlyakov1996,BarronCohenDahmenDeVore2008} and its variants \cite{petrova2016rescaled}:
    \begin{align*}
(\alpha_n, \beta_n, g_n)&=\arg\min_{\alpha, \beta\in\mathbb{R}, g\in\mathbb{D}}\|f-\alpha f_{n-1}-\beta g_n\|,\\ f_n&=\alpha_nf_{n-1}+\beta_ng_n.
    \end{align*}
In this work, the algorithm under consideration is the OGA which has been developed and analyzed in, e.g., Refs.~\citen{Pati1993,DeVoreTemlyakov1996,BarronCohenDahmenDeVore2008,Temlyakov2011,DereventsovTemlyakov2019} and \citen{JonathanXu2022}.
\begin{algorithm}[Orthogonal Greedy Algorithm]\label{OGA}
Set $n=1$ and $f_0=0$. 
\begin{itemize}
\item[]\textsf{Step 1}: Compute the optimizer {$$g_n=\arg\max_{g\in\mathbb{D}}|\langle g,f-f_{n-1}\rangle|.$$}
\item[]\textsf{Step 2}: Set $X_n={\rm span}\{g_1,\ldots,g_n\}$ and compute
$$f_n=P_{X_n}f=P_nf.$$ Set $n=n+1$. Go to \textsf{Step 1}.
\end{itemize}
\end{algorithm}
In \textsf{Step 1} of Algorithm \ref{OGA}, it is possible that the argmax may not exist or may be impossible to compute. To overcome this, one could relax the selection criterion to $$|\langle g_n,f-f_{n-1}\rangle|\geq\gamma\max_{g\in\mathbb{D}}|\langle g,f-f_{n-1}\rangle|$$ with $\gamma\in(0,1]$. This modified algorithm is closer to what is typically implemented in practice and is called the weak OGA \cite{Temlyakov2000}. Weak versions of other greedy algorithms have also been introduced and analyzed in e.g., Ref.~\citen{Temlyakov2011}.

In our error estimates, $C$ is a generic and uniform constant that may change from line to line but is independent of $n$ and underlying compact sets or target functions. By $A\lesssim B$ we denote $A\leq CB$, and $A\eqsim B$ is equivalent to $A\lesssim B$ and $B\lesssim A$.

The rest of this paper is organized as follows. In Section \ref{secHilbert}, we present an entropy-based convergence analysis of greedy RBMs and corresponding applications in Hilbert spaces. In Section \ref{secBanach}, we derive similar optimal convergence estimates for RBMs in Banach spaces. Section \ref{secOGA} is devoted to an optimal direct comparison estimate for the convergence of OGAs in Hilbert spaces. Concluding remarks are presented in Section \ref{secConclusion}.

\section{Convergence of RBMs in Hilbert spaces}\label{secHilbert}
Throughout this section, we assume that $X$ is a Hilbert space unless confusion arises.
The next lemma is the main tool of our entropy-based convergence analysis for greedy algorithms.
\begin{lemma}\label{mainlemma}
Let $V_n=\frac{\pi^\frac{n}{2}}{\Gamma\big(\frac{n}{2}+1\big)}$ be the volume of an $\ell_2$-unit ball in $\mathbb
R^n$.
Let $K$ be a compact set in a Hilbert space $X$. Then for any $v_1, \ldots, v_n\in K$, it holds that
\begin{equation*}
\Big(\prod_{k=1}^n\big\|v_k-P_{k-1}v_k\big\|\Big)^\frac{1}{n}\leq(n!V_n)^\frac{1}{n}\varepsilon_n({\rm co}(K))_X.
\end{equation*}
where $P_0=0$ and $P_k$ is the orthogonal projection onto $\operatorname{span}\{v_1,\ldots,v_k\}.$
\end{lemma}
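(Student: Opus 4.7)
The plan is a volume-comparison argument inside the $n$-dimensional subspace $W:={\rm span}\{v_1,\ldots,v_n\}$. First, the Gram-Schmidt procedure identifies the product $\prod_{k=1}^n\|v_k-P_{k-1}v_k\|$ with $\sqrt{\det G(v_1,\ldots,v_n)}$, where $G$ is the Gram matrix $[\langle v_i,v_j\rangle]$; this is precisely the $n$-dimensional Lebesgue volume of the parallelepiped with edges $v_1,\ldots,v_n$ inside $W$. If the $v_k$ are linearly dependent this volume vanishes and the claim is trivial, so I may assume $\dim W = n$ and fix an orthonormal basis of $W$ identifying it with Euclidean $\mathbb{R}^n$.

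Next I relate this Euclidean volume to ${\rm co}(K)$. Since each $v_k\in K$, the cross-polytope
\begin{equation*}
C:=\Big\{\sum_{k=1}^n c_kv_k:\sum_{k=1}^n|c_k|\leq 1\Big\}
\end{equation*}
is contained in ${\rm co}(K)\cap W$. A standard linear change of variables from the unit cross-polytope in $\mathbb{R}^n$ gives ${\rm vol}_n(C)=\frac{2^n}{n!}\sqrt{\det G}$, whence
\begin{equation*}
{\rm vol}_n\big({\rm co}(K)\cap W\big)\geq\frac{2^n}{n!}\prod_{k=1}^n\|v_k-P_{k-1}v_k\|.
\end{equation*}

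For the matching upper bound I invoke the definition of the entropy number: ${\rm co}(K)$ is covered by $2^n$ balls of radius $\varepsilon:=\varepsilon_n({\rm co}(K))$ in $X$. Intersecting each such ball with the closed subspace $W$ produces an $n$-dimensional ball of radius at most $\varepsilon$, centered at the orthogonal projection of the original center onto $W$. Consequently ${\rm co}(K)\cap W$ is covered by $2^n$ such $n$-balls, so ${\rm vol}_n({\rm co}(K)\cap W)\leq 2^n V_n\varepsilon^n$. Combining the two volume estimates, cancelling $2^n$, and taking the $n$-th root produces the stated inequality.

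The only genuinely geometric step is identifying the correct lower approximation: the product of Gram-Schmidt residuals is a Euclidean parallelepiped volume, and the natural subset of ${\rm co}(K)$ realizing this volume (up to the universal $2^n/n!$ factor) is the cross-polytope on $\pm v_1,\ldots,\pm v_n$. Everything else reduces to standard finite-dimensional linear algebra together with the one-line observation that restricting a ball covering from $X$ to a subspace yields a ball covering of the same cardinality and no larger radius.
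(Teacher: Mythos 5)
Your proof is correct and follows essentially the same route as the paper: compute the volume of the cross-polytope ${\rm co}(\{\pm v_1,\ldots,\pm v_n\})$ via Gram--Schmidt as $\frac{2^n}{n!}\prod_k\|v_k-P_{k-1}v_k\|$, and compare it with the volume of the $2^n$-ball covering of ${\rm co}(K)$ furnished by the entropy number. The only difference is cosmetic: you make explicit the step the paper leaves implicit, namely that projecting the ball centers onto the spanned subspace turns the covering of ${\rm co}(K)$ in $X$ into a covering by $n$-dimensional balls of the same radius.
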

\begin{proof}
Consider the symmetric simplex  \begin{equation*}
S={\rm co}\big(\{v_1,\ldots,v_n\}\big)=\left\{\sum_{i=1}^nc_iv_i: \sum_{i=1}^n|c_i|\leq1\right\}.    
\end{equation*}
We assume that $\{v_k\}_{k=1}^n$ are linearly independent otherwise the inequality automatically holds.
Let $X_n$ be the $n$-dimensional subspace spanned by $v_1,\ldots,v_n$. We identify $X_n$ with the Euclidean space $\mathbb{R}^n$ and $S$ with a $n$-dimensional skewed simplex in $\mathbb{R}^n$. It is straightforward to see that $$\{v_k\}_{k=1}^n\mapsto\{v_k-P_{k-1}v_k\}_{k=1}^n$$
is the Gram-Schmidt orthogonalization of $v_1,...,v_n$. As a result, the volume of $S$ is given by
\begin{equation}\label{svol}
    |S|=\frac{2^n}{n!}\prod_{k=1}^n\big\|v_k-P_{k-1}v_k\big\|.
\end{equation}
The definition of $\varepsilon_n=\varepsilon_n({\rm co}(K))_X$ implies that $S$ is covered by $2^n$ balls of radius $\varepsilon_n$. Therefore by \eqref{svol} and a volume comparison between $S$ and the union of balls, we have
\begin{equation*}
    \frac{2^n}{n!}\prod_{k=1}^n\big\|v_k-P_{k-1}v_k\big\|\leq 2^n\varepsilon^n_nV_n.
\end{equation*}
The proof is complete.
\end{proof}

\subsection{Convergence estimates}
Using Lemma \ref{mainlemma}, we immediately obtain a direct comparison result between the RBM error and the entropy number on convex compact sets.
\begin{theorem}\label{sigmaepsilonthm}
Let $K$ be a compact set in a Hilbert space $X$. For the weak greedy algorithm (Algorithm \ref{WGA}) we have
\begin{equation}
    \sigma_n(K)_X\leq\gamma^{-1}(n!V_n)^\frac{1}{n}\varepsilon_n({\rm co}(K))_X.
\end{equation}
\end{theorem}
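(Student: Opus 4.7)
The plan is to apply Lemma \ref{mainlemma} directly to the sequence $f_1, \ldots, f_n$ produced by Algorithm \ref{WGA}, taking $v_k = f_k$ so that the projections $P_k$ from the lemma agree with the projections onto $X_k = \operatorname{span}\{f_1, \ldots, f_k\}$ appearing in the algorithm. The lemma then supplies an upper bound on the geometric mean of the residuals $\|f_k - P_{k-1}f_k\|$, and the weak greedy selection criterion will furnish a matching lower bound expressed in terms of $\sigma_n(K)$.

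For the lower bound, the criterion in \textsf{Step 1} of Algorithm \ref{WGA} reads $\|f_k - P_{k-1}f_k\| \geq \gamma\,\sigma_{k-1}(K)$ for each $k$. Since $X_{k-1}\subset X_k$, the errors $\sigma_k(K)$ are non-increasing in $k$, so $\sigma_{k-1}(K) \geq \sigma_n(K)$ for all $k = 1, \ldots, n$. Multiplying these $n$ inequalities and taking the $1/n$ power gives
$$\gamma\, \sigma_n(K) \leq \Big(\prod_{k=1}^n \|f_k - P_{k-1}f_k\|\Big)^{1/n},$$
and Lemma \ref{mainlemma} bounds the right-hand side by $(n!V_n)^{1/n}\varepsilon_n({\rm co}(K))$. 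Dividing by $\gamma$ yields the claimed estimate.

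Because Lemma \ref{mainlemma} is already in hand, the proof is essentially a one-step combination and I foresee no serious obstacle. The only mild care point is the degenerate case in which some residual $\|f_k - P_{k-1}f_k\|$ vanishes; this occurs precisely when $K\subset X_{k-1}$, which forces $\sigma_n(K)=0$ by monotonicity, so the desired bound holds trivially.
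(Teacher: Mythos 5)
Your proof is correct and follows essentially the same route as the paper: combine the weak greedy criterion $\|f_k-P_{k-1}f_k\|\geq\gamma\,\sigma_{k-1}(K)$ with the monotonicity $\sigma_{k-1}(K)\geq\sigma_n(K)$ and the geometric-mean bound of Lemma \ref{mainlemma}. The only (immaterial) difference is that the paper first bounds $\big(\prod_{k=1}^n\sigma_{k-1}(K)\big)^{1/n}$ and invokes monotonicity at the end, whereas you apply monotonicity termwise before multiplying.
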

\begin{proof}
For {Algorithm \ref{WGA}} and $n\geq1$, we have 
\begin{equation}\label{gammataun}
    \gamma{\sigma_{n-1}}(K)\leq\big\|f_n-P_{X_{n-1}}f_n\big\|\leq{\sigma_{n-1}}(K).
\end{equation}
It then follows from \eqref{gammataun} and Lemma  \ref{mainlemma} that
\begin{equation}\label{sigmaprod}
\begin{aligned}
    \Big(\prod_{k=1}^n{\sigma_{k-1}}(K)\Big)^\frac{1}{n}&\leq \gamma^{-1}\Big(\prod_{k=1}^n\big\|f_k-P_{X_{k-1}}f_k\big\|\Big)^\frac{1}{n}\\
    &\leq\gamma^{-1}(n!V_n)^\frac{1}{n}\varepsilon_n({\rm co}(K))_X.
\end{aligned}
\end{equation}
Combining \eqref{sigmaprod} with the fact
\[
{\sigma_0(K)\geq\sigma_1(K)\geq \cdots \geq\sigma_{n-1}(K)\geq\sigma_n(K)}
\]
completes the proof.
\end{proof}

Using the well-known Stirling's formula
\begin{equation*}
    \lim_{n\to\infty}n!\left/\sqrt{2\pi n}\Big(\frac{n}{\text{e}}\Big)^n\right.=1,\quad\lim_{n\to\infty}V_n\left/\frac{1}{\sqrt{n\pi}}\Big(\frac{2\pi\text{e}}{n}\Big)^\frac{n}{2}\right.=1,
\end{equation*}
we obtain a corollary about the convergence rate of greedy-type RBMs.
\begin{corollary}\label{sigmaepsiloncor}
    Let $K$ be a compact set in a Hilbert space $X$. For Algorithm \ref{WGA} there exists an absolute constant $C$ independent of $K$ and $n$, such that
\begin{equation*}
    \sigma_n(K)_X\leq C\sqrt{n}\varepsilon_n({\rm co}(K))_X.
\end{equation*}
\end{corollary}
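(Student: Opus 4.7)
The plan is to derive the corollary essentially as a direct asymptotic simplification of Theorem \ref{sigmaepsilonthm}. That theorem already supplies the inequality $\sigma_n(K)_X\leq \gamma^{-1}(n!V_n)^{1/n}\varepsilon_n(\mathrm{co}(K))_X$, so the only remaining work is to show that the prefactor $(n!V_n)^{1/n}$ grows at most like $\sqrt{n}$ times an absolute constant.

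First I would plug the two Stirling asymptotics stated just before the corollary into the product $n!\,V_n$. Writing $n!\sim\sqrt{2\pi n}\,(n/\mathrm{e})^n$ and $V_n\sim \frac{1}{\sqrt{n\pi}}(2\pi\mathrm{e}/n)^{n/2}$, the $\sqrt{n}$ factors in the subexponential prefactors cancel up to a constant, and one sees that the exponential part reduces as
\[
(n/\mathrm{e})^n \cdot (2\pi\mathrm{e}/n)^{n/2} = \bigl(n/\mathrm{e}\bigr)^n \bigl(2\pi\mathrm{e}/n\bigr)^{n/2} = \bigl(2\pi n/\mathrm{e}\bigr)^{n/2}.
\]
Taking the $n$-th root of $n!V_n$ then turns this factor into $\sqrt{2\pi n/\mathrm{e}}$, while the subexponential prefactors contribute only $1+o(1)$ to the $n$-th root. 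Consequently $(n!V_n)^{1/n}=\sqrt{2\pi/\mathrm{e}}\,\sqrt{n}\bigl(1+o(1)\bigr)$, which in particular gives a uniform bound $(n!V_n)^{1/n}\leq C'\sqrt{n}$ valid for all $n\geq 1$ (absorbing the $\gamma^{-1}$ and the constant $\sqrt{2\pi/\mathrm{e}}$ into $C$).

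Substituting this estimate back into the bound from Theorem \ref{sigmaepsilonthm} yields the claimed inequality. I do not expect any real obstacle here: the only subtlety is making sure that the $o(1)$ from the Stirling approximations is handled correctly to produce a bound that is valid for every $n\geq 1$ and not merely asymptotically. This is routinely fixed by either checking the small cases $n=1,2,\dots, N_0$ by hand and using Stirling beyond $N_0$, or equivalently by using standard nonasymptotic Stirling estimates to obtain explicit constants; either way the claimed form $\sigma_n(K)_X\leq C\sqrt{n}\,\varepsilon_n(\mathrm{co}(K))_X$ with an absolute $C$ follows at once.
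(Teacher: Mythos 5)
Your proposal is correct and follows exactly the paper's route: the corollary is obtained from Theorem \ref{sigmaepsilonthm} by bounding the factor $(n!V_n)^{1/n}$ via Stirling's formula, giving $(n!V_n)^{1/n}\eqsim\sqrt{2\pi/\mathrm{e}}\,\sqrt{n}$ and hence a uniform bound $C\sqrt{n}$ after absorbing $\gamma^{-1}$ into the constant. Your extra care about making the asymptotic estimate uniform in $n$ is a reasonable (and easily handled) refinement of the same argument.
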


A natural question is whether the convergence rate of $\sigma_n(K)$ in Corollary \ref{sigmaepsiloncor} is sharp and whether the factor $\sqrt{n}$ is removable. In the following we give a negative answer to this question.
\begin{proposition}
There exists a Hilbert space $X$ and a symmetric convex compact set $K \subset X$ such that Algorithm \ref{GA} satisfies
\begin{equation*}
    \sigma_n(K)_X\eqsim\sqrt{n}\varepsilon_n(K)_X.
\end{equation*}
\end{proposition}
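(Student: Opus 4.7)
Since Corollary \ref{sigmaepsiloncor} already supplies the upper estimate $\sigma_n(K) \leq C\sqrt{n}\,\varepsilon_n(K)$ for every symmetric convex compact $K$, the plan is to exhibit one concrete $K$ in a Hilbert space and verify the matching lower bound $\sigma_n(K) \gtrsim \sqrt{n}\,\varepsilon_n(K)$ directly. I would construct $K$ as an infinite orthogonal union of scaled cross polytopes, so that the greedy iteration is forced to produce a long plateau while the entropy can exploit the thinness of each block.

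Specifically, decompose $\ell_2 = \bigoplus_{j\ge 1} H_j$ as an orthogonal sum of finite-dimensional subspaces with $\dim H_j = N_j$, let $\{\phi_{j,i}\}_{i=1}^{N_j}$ be an orthonormal basis of $H_j$, and set
$$K = \overline{\mathrm{co}}\{\pm c_j \phi_{j,i} : j\ge 1,\ 1\le i \le N_j\},$$
for a positive decreasing null sequence $(c_j)$ and rapidly growing $(N_j)$ (for instance $N_j = 2^{2^j}$ with $c_j$ chosen so that $c_{j+1} \asymp 1/\sqrt{\sum_{k\le j}N_k}$). As the closed convex hull of a compact sequence, $K$ is symmetric, convex, and compact.

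For the greedy trajectory, I would note that because $(c_j)$ is nonincreasing and the $H_j$ are mutually orthogonal, Algorithm \ref{GA} exhausts all $N_j$ directions of block $j$ before moving on. A direct computation using $\sum_{j,i}|\lambda_{j,i}|\le 1$ yields the explicit formula
$$\sigma_n(K) = c_{j+1},\qquad \sum_{k\le j} N_k \le n < \sum_{k\le j+1}N_k,$$
with the supremum attained at $x = c_{j+1}\phi_{j+1,1}$. For the entropy, I would cover $K$ block-by-block using Schütt's sharp estimate
$$\varepsilon_m(B_1^{N_k},\ell_2^{N_k}) \asymp \sqrt{\log(1 + 2N_k/m)/m},\quad m \le N_k,$$
on each scaled cross polytope $c_k B_1^{N_k}$ inside $H_k$, while controlling the tail contribution from blocks $k > j$ by a Cauchy--Schwarz bound $\|P_{>j}x\|_2 \le c_{j+1}$. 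Allocating the $n$ covering bits across blocks, the super-exponential growth of $N_j$ keeps the Schütt logarithm $\log(1+2N_j/m)$ of order one when $m \asymp n \asymp N_j$, delivering the target estimate $\varepsilon_n(K) \lesssim c_{j+1}/\sqrt{n}$. Combined with $\sigma_n(K) = c_{j+1}$, this gives $\sigma_n(K) \gtrsim \sqrt{n}\,\varepsilon_n(K)$, and Corollary \ref{sigmaepsiloncor} furnishes the reverse inequality.

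The main obstacle is precisely this entropy estimate: a naive tail bound merely yields $\varepsilon_n(K) \lesssim c_{j+1}$, which matches $\sigma_n(K)$ and produces only ratio $O(1)$. Beating this requires placing the covering centers off of the head subspace so they can absorb part of the tail, and tuning $(c_j, N_j)$ so that the tail of $K$ is itself a compact set of substantially smaller entropy than $c_{j+1}$. The super-exponential growth of $N_j$, together with a geometric decay of $c_j$, is what one expects to make the per-block Schütt estimate tight without parasitic logarithmic factors; getting this balance right is the technical heart of the argument.
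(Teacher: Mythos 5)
Your overall strategy (explicit block construction, blockwise use of Sch\"utt's estimate, subadditivity of entropy for the upper bound, and Corollary \ref{sigmaepsiloncor} for the reverse inequality) is reasonable in outline, but the parameter regime you chose makes the key estimate \emph{false}, not merely hard. The obstruction is the next block, not the tail. Fix $n$ at the start of a plateau, $n\approx M_j:=\sum_{k\le j}N_k$, where $\sigma_n(K)=c_{j+1}$. Since $K$ contains the scaled cross polytope $c_{j+1}B^1_{N_{j+1}}$ of block $j+1$, monotonicity of entropy numbers and the \emph{two-sided} Sch\"utt estimate give, for $\log_2 N_{j+1}\le n\le N_{j+1}$, the lower bound $\varepsilon_n(K)\ge\varepsilon_n\big(c_{j+1}B^1_{N_{j+1}}\big)_{\ell_2}\gtrsim c_{j+1}\sqrt{\log(1+N_{j+1}/n)/n}$. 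With $N_j=2^{2^j}$ one has $N_{j+1}=N_j^2$ and $M_j\asymp N_j$, so at $n\asymp M_j$ this is $\gtrsim c_{j+1}\sqrt{2^j/n}$, i.e. $\sqrt{n}\,\varepsilon_n(K)/\sigma_n(K)\gtrsim\sqrt{2^j}\to\infty$ along the subsequence $n\asymp M_j$. Hence the bound $\varepsilon_n(K)\lesssim c_{j+1}/\sqrt{n}$ that you aim to prove cannot hold, and $\sigma_n(K)\eqsim\sqrt{n}\,\varepsilon_n(K)$ fails for your $K$ (it survives only up to a divergent logarithmic factor, or along the subsequence $n\asymp N_{j+1}$). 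Your intuition about the growth of $N_j$ is inverted: to keep the Sch\"utt logarithm of the \emph{next} block of order one at the start of each plateau you need $\log(1+N_{j+1}/M_j)=O(1)$, i.e. at most geometric growth $N_{j+1}\lesssim N_1+\cdots+N_j$, combined with geometrically decaying scales $c_j$ (equivalently, coordinate weights decaying like a power of the coordinate index).

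With that correction your construction becomes essentially the example the paper gives in Section \ref{secBanach} for $X=\ell_p$: dyadic blocks of size $2^{k-1}$ with weights $2^{-k\alpha}$, entropy controlled by a head/middle/tail splitting using a volume bound for the head, Lemma \ref{Schuttlemma} for the middle blocks, the trivial bound for the tail, and $\varepsilon_{s+t}(A+B)\le\varepsilon_s(A)+\varepsilon_t(B)$; taking $p=2$ there already yields the Hilbert-space statement, so the repaired argument is not new but is correct. Note also that the paper's own proof of this proposition takes a different route: it chooses $K={\rm co}(\mathbb{P}_k^d)\subset L_2(\Omega)$ and combines $d_n(K)\le\sigma_n(K)$ with the sharp two-sided bounds for $d_n({\rm co}(\mathbb{P}_k^d))$ and $\varepsilon_n({\rm co}(\mathbb{P}_k^d))$ known for the ${\rm ReLU}_k$ dictionary, plus Corollary \ref{sigmaepsiloncor}, so no new entropy computation is required. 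As written, however, your proof does not go through.
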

\begin{proof}
On a Lipschitz domain $\Omega\subset\mathbb{R}^d$, we consider the $\verb|ReLU|_k$ activation function $\sigma(x)=\max(x,0)^k$ with $k\geq1$ and the dictionary \[
\mathbb{P}_k^d=\big\{\sigma(\omega\cdot x+b): \omega\in\mathbb{S}_1^d,~b\in[\underline{b},\bar{b}]\big\},\]
where $\mathbb{S}_1^d$ is the $d$-dimensional unit sphere centered at 0, and $\underline{b}$, $\bar{b}$ are constants depending on $\Omega$. We set $K={\rm co}(\mathbb{P}_k^d)$ and $X=L_2(\Omega)$. It has been shown in Ref.~\citen{JonathanXuFoCM} that 
\begin{subequations}\label{Pkd}
    \begin{align}
        d_n({\rm co}(\mathbb{P}_k^d))_{L_2(\Omega)}&=\mathcal{O}(n^{-\frac{2k+1}{2d}}),\\ \varepsilon_n({\rm co}(\mathbb{P}_k^d))_{L_2(\Omega)}&=\mathcal{O}(n^{-\frac{1}{2}-\frac{2k+1}{2d}}).
    \end{align}
\end{subequations}
Combining \eqref{Pkd} and Corollary \ref{sigmaepsiloncor} we arrive at $$d_n({\rm co}(\mathbb{P}_k^d))_{L_2(\Omega)}\leq\sigma_n({\rm co}(\mathbb{P}_k^d))_{L_2(\Omega)}\lesssim\sqrt{n}\varepsilon_n({\rm co}(\mathbb{P}_k^d))_{L_2(\Omega)},$$ and obtain the convergence rate $$\sigma_n({\rm co}(\mathbb{P}_k^d))_{L_2(\Omega)}=\mathcal{O}(n^{-\frac{2k+1}{2d}}).$$ 
As a result, the error estimate in Corollary \ref{sigmaepsiloncor} is not improvable. 
\end{proof}


In the following we discuss the advantage of our direct comparisons over the classical ones.
The first direct comparison result for greedy-type RBMs is derived in Ref.~\citen{BuffaMaday2012}:
\begin{equation*}
    \sigma_n(K)\lesssim n2^nd_n(K),
\end{equation*}
which is useful if $d_n(K)=o(n^{-1}2^{-n})$. Later on,
this estimate has been improved in Refs.~\citen{BinevCohenDahmenDeVore2011} and \citen{DeVorePetrova2013}. Currently the sharpest direct comparison  in the RBM literature (see Ref.~\citen{DeVorePetrova2013}) is 
\begin{equation}\label{sigma2n}
    \sigma_{2n}(K)\lesssim\sqrt{d_n(K)}.
\end{equation}
Next we show that our direct comparison result in Theorem \ref{sigmaepsilonthm} is indeed sharper than \eqref{sigma2n} as $n\to\infty.$ In doing so, we need the well-known Carl's inequality (see Refs.~\citen{Carl1981} and \citen{Lorentz1996}).
\begin{lemma}[Carl's inequality]\label{Carlthm}
 For every $\alpha>0$, there exists a {constant} $C(\alpha)>0$ such that for any compact set $K$ in a Banach space $X$,
\begin{equation*}
    \varepsilon_n(K)\leq C(\alpha)n^{-\alpha}\max_{1\leq i\leq n}(i^\alpha d_{i-1}(K)).
\end{equation*}
\end{lemma}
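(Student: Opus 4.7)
The plan is to prove Carl's inequality via a dyadic chaining argument that combines Kolmogorov-width approximations at multiple scales with volumetric covering bounds in finite-dimensional subspaces. Set $M := \max_{1 \leq i \leq n}(i^{\alpha} d_{i-1}(K))$, so by definition $d_{i-1}(K) \leq M i^{-\alpha}$ for all $1 \leq i \leq n$. The goal is to exhibit, for some large enough $n$, a covering of $K$ by $2^n$ balls of radius at most $C(\alpha) n^{-\alpha} M$.

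First I would fix an integer $L$ with $2^{L} \leq n < 2^{L+1}$ and, for each $k = 0, 1, \ldots, L$, choose a subspace $X_k \subset X$ of dimension $2^k$ such that every $f \in K$ lies within distance at most $2 d_{2^{k}}(K) \leq 2 M \cdot 2^{-k\alpha}$ of $X_k$ (the factor $2$ absorbs the infimum slack in the definition of $d_n$). Let $Q_k : X \to X_k$ denote (measurable selections of) nearest-point approximants, so that $\|f - Q_k f\| \leq 2 M \cdot 2^{-k\alpha}$. Then for each $f \in K$ one has the telescoping identity
\begin{equation*}
Q_L f = Q_0 f + \sum_{k=1}^{L}(Q_k f - Q_{k-1} f),
\end{equation*}
together with the tail bound $\|f - Q_L f\| \leq 2 M \cdot 2^{-L\alpha} \lesssim M n^{-\alpha}$. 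The key observation is that each increment $Q_k f - Q_{k-1} f$ lies in the finite-dimensional space $X_k + X_{k-1}$ of dimension at most $2^{k+1}$, and has norm at most $\|f - Q_k f\| + \|f - Q_{k-1} f\| \leq 6 M \cdot 2^{-(k-1)\alpha}$.

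Next I would invoke the standard volumetric covering bound: any ball of radius $r$ in a Banach space of dimension $m$ admits a covering by at most $5^m$ balls of radius $r/2$, hence by at most $(2/\rho)^{m} \cdot 5^m$ balls of radius $\rho r$ for $0 < \rho < 1$. Applying this to each increment, I would cover the range of $Q_k f - Q_{k-1} f$ in $X_k + X_{k-1}$ by a net $N_k$ of cardinality at most $C^{2^{k+1}}$ whose balls have radius $\eta_k := M \cdot 2^{-k\alpha} \cdot 2^{-k}$ (or a similarly rapidly decaying choice). Associating to each $f \in K$ the tuple $(p_0, p_1, \ldots, p_L)$ of nearest net points yields a net $N \subset X$ of cardinality $\prod_{k=0}^L |N_k| \leq \exp\bigl(C \sum_{k=0}^L 2^k\bigr) \leq \exp(C \cdot 2^{L+1}) \leq 2^n$, after possibly replacing $C$ and adjusting the constant in the radius choice so the geometric series $\sum_k 2^k \log C$ is dominated by $n$.

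The covering radius is then $\sum_{k=0}^L \eta_k + \|f - Q_L f\| \leq C(\alpha) M n^{-\alpha}$, where summability of $\sum_k 2^{-k(\alpha+1)}$ produces the absolute constant $C(\alpha)$ (here $\alpha > 0$ is essential to guarantee the geometric decay). Taking the infimum over $\epsilon$-slack in the definition of $d_i(K)$ yields $\varepsilon_n(K) \leq C(\alpha) n^{-\alpha} M$, which is precisely the claim. The main obstacle I foresee is the careful balancing in the choice of the net radii $\eta_k$ and the number of dyadic levels $L$ so that the two competing constraints, namely total cardinality $\leq 2^n$ and total radius $\lesssim n^{-\alpha} M$, are simultaneously met; the $\alpha$-dependence of $C(\alpha)$ arises precisely here through the geometric series, and degenerates as $\alpha \downarrow 0$.
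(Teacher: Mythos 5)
The paper does not prove this lemma at all; it is quoted as a known result with citations to Carl (1981) and to Lorentz--Golitschek--Makovoz, so the only thing to assess is whether your chaining argument stands on its own. The overall strategy (dyadic subspaces nearly realizing $d_{2^k}(K)$, telescoping the approximants, and covering each increment volumetrically in a space of dimension $\le 2^{k+1}$) is indeed the classical route to Carl's inequality, but the quantitative heart of the proof is exactly where your specific choices fail, in two ways. First, with $\eta_k = M\,2^{-k\alpha}2^{-k}$ the number of balls of radius $\eta_k$ needed to cover a ball of radius $r_k \approx M\,2^{-(k-1)\alpha}$ in dimension $m_k\le 2^{k+1}$ is of order $(3r_k/\eta_k)^{m_k} \approx (C\,2^{k})^{2^{k+1}}$, not $C^{2^{k+1}}$: the ratio $r_k/\eta_k$ grows like $2^{k}$, so each level costs an extra factor $k$ per dimension and the total cardinality is $2^{\Theta(n\log n)}$, which cannot be pushed below $2^n$ by ``adjusting the constant in the radius choice'' (shrinking radii only increases the count). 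Second, and independently, your radius bound is wrong: $\sum_{k=0}^{L}\eta_k = M\sum_k 2^{-k(\alpha+1)}$ is a convergent geometric series whose value is a constant times $M$ (its first term is $\eta_0 = M$), so the covering radius you produce is of order $M$, not $C(\alpha)Mn^{-\alpha}$; summability does not manufacture the factor $n^{-\alpha}$.

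The repair is the standard one and changes the balance you describe: peg the net radii to the finest scale, e.g.\ $\eta_k \approx M\,2^{-L\alpha}\,2^{-(L-k)}$, so that $\sum_k \eta_k \lesssim M\,2^{-L\alpha} \eqsim M n^{-\alpha}$. The ratio at level $k$ is then $r_k/\eta_k \approx 2^{(L-k)(\alpha+1)}$, which is large at coarse levels, but affordable precisely because the dimension there is only $\approx 2^{k+1}$: the total exponent is $\sum_{k\le L} 2^{k+1}\bigl(c+(L-k)(\alpha+1)\bigr) = O_\alpha(2^{L}) = O_\alpha(n)$. This yields a cover by $2^{C_\alpha n}$ (not $2^n$) balls of radius $\lesssim_\alpha Mn^{-\alpha}$, so a final rescaling step is needed: run the construction with roughly $n/C_\alpha$ in place of $n$ (using that $\max_{i\le \lfloor n/C_\alpha\rfloor} i^\alpha d_{i-1}(K)\le M$ and monotonicity of entropy numbers), which is exactly where $C(\alpha)$ acquires its $\alpha$-dependence. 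There is also a small index slip ($M$ only controls $d_j$ for $j\le n-1$, so $d_{2^L}$ with $2^L=n$ is not directly covered), easily fixed by taking $2^L\le n-1$; but the two quantitative points above are genuine gaps, not bookkeeping.
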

For a compact set $K$ with $d_n(K)\lesssim n^{-s}$ and $s>0$,
Lemma \ref{Carlthm} implies that $\varepsilon_n(K)\lesssim n^{-s}$, i.e., the entropy number decays as fast as the polynomial-decaying Kolmogorov $n$-width.
Therefore combining the fact  (see Ref.~\citen{Lorentz1996})  
\begin{equation}\label{dnKcoK}
   d_n(K)=d_n({\rm co}(K)) 
\end{equation}
with Lemma \ref{Carlthm} and Corollary \ref{sigmaepsiloncor}, we have 
\begin{equation}
    d_n(K)\lesssim n^{-s}\Longrightarrow\sigma_n(K)\lesssim \sqrt{n}\varepsilon_n({\rm co}(K))\lesssim n^{\frac{1}{2}-s}.
\end{equation}
In contrast, the estimate \eqref{sigma2n} under the same assumption yields
\begin{equation*}
    d_n(K)\lesssim n^{-s}\Longrightarrow\sigma_n(K)\lesssim n^{-\frac{s}{2}}.
\end{equation*}
Thus our direct comparisons in Theorem \ref{sigmaepsilonthm} are asymptotically sharper than the classical result \eqref{sigma2n} when $s>1$.

\subsection{Examples}\label{subsecExamples}
In the rest of this section, we  discuss  applications of our results to the {second} order elliptic equation 
\begin{subequations}\label{Poisson}
    \begin{align}
    -\nabla\cdot(a\nabla u(a))&=f\quad\text{ in }\Omega\subset\mathbb{R}^d,\\
    u(a)&=0\quad\text{ on }\partial\Omega.
\end{align}
\end{subequations}
Here $f\in H^{-1}(\Omega)$ is fixed, the diffusion coefficient $a\in L_\infty(\Omega)$ is the varying input parameter belonging to the function class $\mathcal{A}$ that will be specified later, and $u(a)\in H_0^1(\Omega)$ is the weak solution corresponding to $a$. First we present a simple lemma for calculating the entropy numbers of the solution manifold $\mathcal{M}=u(\mathcal{A})$ of \eqref{Poisson}.
\begin{lemma}\label{entropylemma}
    Let $0<\alpha\leq1$ and $\Phi: X\rightarrow Y$ be an $\alpha$-H\"older continuous mapping between Banach spaces $X$ and $Y$ such that for all $x_1, x_2\in X,$
    \begin{equation*}
        \|\Phi(x_1)-\Phi(x_2)\|_Y\leq L\|x_1-x_2\|^\alpha_X.
    \end{equation*}
    Then for any compact set $K\subset X$ we have
    \begin{equation*}
        \varepsilon_n(\Phi(K))_Y\leq  L\varepsilon_n(K)^\alpha_X.    \end{equation*}
\end{lemma}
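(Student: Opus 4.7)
\medskip

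\noindent\textbf{Proof plan.} The plan is to argue directly from the definition of $\varepsilon_n(K)_X$. Since the H\"older bound transports norm inequalities in $X$ to norm inequalities in $Y$ with an $\alpha$-power and a factor of $L$, any covering of $K$ by small balls in $X$ should automatically induce a covering of $\Phi(K)$ by correspondingly small balls in $Y$; once this is made precise, the stated inequality falls out by taking an infimum.

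First I would fix an arbitrary $\varepsilon>\varepsilon_n(K)_X$. By the definition of the entropy number, there exist centers $c_1,\ldots,c_{2^n}\in X$ (here I use the hypothesis that $\Phi$ is defined on all of $X$, so that it is harmless if the $c_i$ are not themselves in $K$) such that
\begin{equation*}
K\subset\bigcup_{i=1}^{2^n}B_X(c_i,\varepsilon),
\end{equation*}
where $B_X(c,r)=\{x\in X:\|x-c\|_X\leq r\}$.

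Next I would verify that the image set $\{\Phi(c_1),\ldots,\Phi(c_{2^n})\}\subset Y$ serves as centers for a covering of $\Phi(K)$ by balls of radius $L\varepsilon^{\alpha}$. Given any $y\in\Phi(K)$, write $y=\Phi(x)$ with $x\in K$, and choose an index $i$ with $x\in B_X(c_i,\varepsilon)$. The H\"older hypothesis gives
\begin{equation*}
\|y-\Phi(c_i)\|_Y=\|\Phi(x)-\Phi(c_i)\|_Y\leq L\|x-c_i\|_X^{\alpha}\leq L\varepsilon^{\alpha},
\end{equation*}
so indeed $\Phi(K)\subset\bigcup_{i=1}^{2^n}B_Y(\Phi(c_i),L\varepsilon^{\alpha})$. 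From the definition of the entropy number in $Y$ this yields $\varepsilon_n(\Phi(K))_Y\leq L\varepsilon^{\alpha}$, and letting $\varepsilon\downarrow\varepsilon_n(K)_X$ completes the proof.

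There is no real obstacle: the only subtlety worth flagging is whether the centers $c_i$ can be taken in $K$ or only in $X$. Because the lemma hypothesizes $\Phi$ is H\"older on all of $X$ (not merely on $K$), we can apply the inequality at an arbitrary center, which avoids inflating the radius by a factor of $2$ and gives exactly the stated constant $L$.
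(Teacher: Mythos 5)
Your proof is correct and follows essentially the same argument as the paper: cover $K$ by $2^n$ balls of radius (approximately) $\varepsilon_n(K)_X$, push the centers through $\Phi$, and use the H\"older bound to see that the images cover $\Phi(K)$ with radius inflated to $L\varepsilon^{\alpha}$. Your extra step of working with $\varepsilon>\varepsilon_n(K)_X$ and letting $\varepsilon\downarrow\varepsilon_n(K)_X$ is a slightly more careful handling of the infimum in the definition, but the approach is the same.
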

\begin{proof}
    Let $K$ be covered by $2^n$ balls $\{B_\varepsilon(x_i)\}_{i=1}^{2^n}$ of radius $\varepsilon=\varepsilon_n(K)_X$, where $B_\varepsilon(x_i)$ is  centered at $x_i\in X$. Then the H\"older continuity of $\Phi$ implies that $\Phi(K)$ can be covered by balls $\{B_{L\varepsilon^\alpha}(\Phi(x_i))\}_{i=1}^{2^n}$, suggesting that $\varepsilon_n(\Phi(K))_Y\leq L\varepsilon^\alpha.$
\end{proof}

For parametric PDEs, it is generally much more difficult to calculate the $n$-width $d_n(u(\mathcal{A}))$ of the solution manifold $\mathcal{M}=u(\mathcal{A})$ than computing $\varepsilon_n(u(\mathcal{A}))$. To remedy this situation, Cohen and DeVore \cite{CohenDeVore2015,CohenDeVore2016} developed a technical tool for estimating  $d_n(u(\mathcal{A}))$ in terms of $d_n(\mathcal{A})$. 
\begin{theorem}[Theorem 1 from Ref.~\citen{CohenDeVore2016}]\label{CohenDeVore}
 For a pair of complex Banach spaces $X$ and $Y$, suppose $u$ is a holomorphic mapping from an open set $O\subset X$ into $Y$ and $u$ is uniformly bounded on $O$:
\begin{equation*}
\sup_{a\in O}\|u(a)\|_Y<\infty.
\end{equation*}
If $K\subset O$ is a compact set, then for any $\alpha>1$ and $\beta<\alpha-1$ we have
\begin{equation*}
    \sup_{n\geq1} n^\alpha d_n(K)_X<\infty\Longrightarrow\sup_{n\geq1} n^\beta d_n(u(K))_Y<\infty~.
\end{equation*}
\end{theorem}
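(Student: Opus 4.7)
The plan is to combine the given polynomial decay of the $n$-widths of $K$ with the holomorphy of $u$ via a Taylor-expansion argument: approximate each $a\in K$ by its best approximation in an optimal $n$-dimensional subspace, then replace $u(a)$ by a truncated Taylor expansion whose range lies in a finite-dimensional subspace of $Y$ whose dimension and approximation error we can balance.

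\textbf{Setup.} From the hypothesis $\sup_{n}n^{\alpha}d_{n}(K)_{X}<\infty$, choose, for each $n$, an $n$-dimensional subspace $V_{n}\subset X$ such that $\operatorname{dist}(a,V_{n})_{X}\leq c\,n^{-\alpha}$ for all $a\in K$; let $a_{n}\in V_{n}$ realize (up to a constant) this best approximation. Because $K$ is compact and $u$ is holomorphic and bounded on the open set $O\supset K$, there is some $\rho>0$ so that the $\rho$-neighborhood of $K$ in $X$ lies in $O$. Cauchy estimates for Banach-space holomorphic mappings then give
\begin{equation*}
\bigl\|D^{k}u(a)\bigr\|_{\mathcal{L}(X^{k},Y)}\leq \frac{M\,k!}{\rho^{k}},\qquad a\in K,\ k\geq 0,
\end{equation*}
where $M=\sup_{O}\|u\|_{Y}$. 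Once $n$ is large enough that $c\,n^{-\alpha}<\rho/2$, the Taylor series
\begin{equation*}
u(a)=\sum_{k=0}^{\infty}\frac{1}{k!}D^{k}u(a_{n})[a-a_{n}]^{k}
\end{equation*}
converges for every $a\in K$, with tail after $m$ terms bounded by $M\sum_{k>m}(cn^{-\alpha}/\rho)^{k}$.

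\textbf{Finite-dimensional approximation.} Let $T_{m}(a)$ denote the degree-$m$ truncation of this Taylor series. Each term $D^{k}u(a_{n})[a-a_{n}]^{k}$ depends on $a$ only through the $n$-dimensional vector $a_{n}\in V_{n}$ (which lives in a bounded subset of $V_{n}$ depending on $K$) and through $a-a_{n}\in X$. To put $T_{m}(a)$ into a fixed finite-dimensional subspace of $Y$, I would further approximate the $V_{n}$-dependence polynomially: since the map $v\mapsto D^{k}u(v)$ is itself holomorphic from a bounded open subset of $V_{n}$ into the space of $k$-linear forms, classical multivariate polynomial approximation on $V_{n}\cong\mathbb{C}^{n}$ yields an approximation by polynomials of some degree $q$ with exponentially small remainder. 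Expanding $a-a_{n}$ in a basis of a slightly larger subspace (or absorbing it into the polynomial expansion in $V_{n+1}$, say) puts the entire truncated approximant in a subspace of $Y$ of dimension at most $N(n,m,q)\lesssim\binom{n+m+q}{n}$, with an error bound controlled by the Taylor tail $\sim(n^{-\alpha})^{m+1}$ plus the polynomial-approximation remainder on $V_{n}$.

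\textbf{Balancing the parameters.} The crux of the argument is to choose $m=m(n)$ and $q=q(n)$ so that the dimension $N$ grows only polynomially in $n$ while the error decays like $n^{-\alpha(m+1)}$. Concretely, taking $m$ of order $n^{\delta}$ for small $\delta>0$ makes $\binom{n+m}{n}$ polynomial in $n$, and the Taylor-tail error becomes of order $n^{-\alpha m}=\exp(-\alpha n^{\delta}\log n)$, which dominates any inverse polynomial in $n$. Reindexing by the ambient dimension $N$, the total error in $Y$ can be made at most $C\,N^{-\beta}$ for any prescribed $\beta<\alpha-1$; the loss of one power relative to $\alpha$ reflects the combinatorial cost of enlarging the subspace from $V_{n}$ to accommodate the polynomial expansion.

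\textbf{Expected obstacles.} The most delicate point is keeping track of two competing quantities simultaneously: the dimension of the range of $T_{m}$, which grows combinatorially with $m$ and $n$, and the error of the approximation, which requires using the Cauchy bound on $D^{k}u$ together with the decay $\|a-a_{n}\|\lesssim n^{-\alpha}$ in a way that does not lose factors like $k!$. Being careful about uniformity (one subspace of $Y$ must work for all $a\in K$) and about handling the composition of finite-dimensional polynomial approximation on $V_{n}$ with Taylor expansion in the transverse direction is where the argument requires the most care. All of this is classical and is carried out in Ref.~\citen{CohenDeVore2016}, so I would appeal to that reference rather than reprove it here.
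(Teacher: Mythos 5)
This statement is quoted in the paper as Theorem 1 of Ref.~\citen{CohenDeVore2016}; the paper itself gives no proof, so there is nothing internal to compare against, and your decision to sketch the strategy and then defer to that reference is consistent with how the paper treats it. One caution about the sketch itself: the balancing step is not correct as stated. With $m\sim n^{\delta}$ the dimension count $\binom{n+m}{n}$ is of order $\exp\big(c\,n^{\delta}\log n\big)$, which is super-polynomial in $n$, not polynomial; the argument can still be closed because the Taylor-tail error $\sim n^{-\alpha(m+1)}$ is of a comparable exponential order, but then the reindexing of the error in terms of the ambient dimension $N$ must be done carefully, and it is precisely this bookkeeping (carried out in Ref.~\citen{CohenDeVore2016} by approximating $u$ on finite-dimensional sections by $Y$-valued polynomials and bounding the dimension of the span of their coefficients) that produces the loss of one full power, i.e.\ $\beta<\alpha-1$ rather than $\beta$ close to $\alpha$. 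So your outline identifies the right ingredients (optimal subspaces $V_{n}$, Cauchy estimates on a $\rho$-neighborhood of the compact set $K$, truncated Taylor/polynomial expansion with finite-dimensional range, and a trade-off between range dimension and remainder), but the quantitative balancing as written would not survive scrutiny without the more careful parameter choices of the cited proof, which is the appropriate place to send the reader.
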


\subsubsection{Example 1}
Let $C^s(\Omega)=C^{k,\alpha}(\Omega)$ with regularity index $$s:=k+\alpha$$ be the space of functions  on $\Omega$ whose $k$-th derivatives are H\"older continuous with exponent $\alpha\in(0,1]$. In the first example, we focus on the following function class of diffusion coefficients
\begin{equation*}
    \mathcal{A}=\left\{a\in L_\infty(\Omega): \inf_{x\in\Omega}a(x)>M_0>0,~\|a\|_{C^{k,\alpha}(\Omega)}\leq M_1\right\},
\end{equation*} 
where constants $M_0$, $M_1$ are fixed.
In practice, $a\in\mathcal{A}$ has additional affinely parametrized structures allowing an efficient online implementation in RBMs, see, e.g., Refs.~\citen{HesthavenRozzaStamm2016} and \citen{Quarteroni2016}. Otherwise, researchers often utilize the empirical interpolation method \cite{BarraultMadayNguyen2004} to construct affinely parametrized approximations to $a$ and then apply RBMs to the approximate model.

Using Theorem \ref{CohenDeVore}, the holomorphy of the solution operator $u: a\mapsto u(a)$, and $d_n(\mathcal{A})_{L_\infty(\Omega)}=\mathcal{O}(n^{-\frac{s}{d}})$ (see Refs.~\citen{Lorentz1962} and \citen{CohenDeVore2016}), it has been proved in Ref.~\citen{CohenDeVore2016} that
\begin{equation}\label{dnuACohenDeVore}
d_n(u(\mathcal{A}))_{H^1(\Omega)}\lesssim n^{-\frac{s}{d}+1+t}
\end{equation}
for any $t>0.$ Then as a result of \eqref{dnuACohenDeVore} and the classical convergence estimate of greed-type RBMs (see Ref.~\citen{BinevCohenDahmenDeVore2011}), for $s>d$,
\begin{equation}\label{sigmanuACohenDeVore}
\sigma_n(u(\mathcal{A}))_{H^1(\Omega)}\lesssim n^{-\frac{s}{d}+1+t}.
\end{equation}
However, \eqref{sigmanuACohenDeVore} fails to indicate any convergence of RBMs for approximating the solution process $a\mapsto u(a)$ of \eqref{Poisson} when $s\leq d$. 

Our convergence results for RBMs shed some light on  such cases. For \eqref{Poisson} there exists a useful perturbation result (see Theorem 2.1 from Ref.~\citen{BonitoDeVoreNochetto2013})
\begin{equation}\label{purturbation}
\|u(a_1)-u(a_2)\|_{H^1(\Omega)}\leq C_\Omega\|\nabla u(a)\|_{L_p(\Omega)}\|a_1-a_2\|_{L_q(\Omega)},    
\end{equation}
where $p\geq2$ and $q:=2p/(p-2)$. In particular, \eqref{purturbation} with $p=2$
implies the Lipschitz property of
\[
u:L_\infty(\Omega)\rightarrow H^1(\Omega),\quad a\mapsto u(a).
\]
Combining this fact with Lemma \ref{entropylemma} and $\varepsilon_n(\mathcal{A})_{L_\infty(\Omega)}=\mathcal{O}(n^{-\frac{s}{d}})$ (see Ref.~\citen{KolmogorovTihomirov1961}), we have 
\begin{equation}\label{epsilonuA}
    \varepsilon_n(u(\mathcal{A}))_{H^1(\Omega)}\lesssim\varepsilon_n(\mathcal{A})_{L_\infty(\Omega)}\lesssim n^{-\frac{s}{d}}.
\end{equation}
Next we need a lemma from Ref.~\citen{CarlKyreziPajor1999} about the relations between the entropy numbers of $K$ and ${\rm co}(K)$, see also Refs.~\citen{Steinwart2000} and \citen{Gao2001} similar results. 
\begin{lemma}[Proposition 6.2 from Ref.~\citen{CarlKyreziPajor1999}]\label{entropyKcoK}
    Let $K$ be a compact set in a Banach space $X$ of type $p\in(1,2]$, and assume that
\begin{equation*}
\varepsilon_n(K)_X\lesssim  n^{-\alpha}    
\end{equation*}
for some $\alpha>1-1/p$. Then there exists a constant $c(\alpha,p)$ such that
\begin{equation*}
    \varepsilon_n({\rm co}(K))_X\leq c(\alpha,p)n^{-1+\frac{1}{p}}(\log(n+1))^{-\alpha+1-\frac{1}{p}}.
\end{equation*}
\end{lemma}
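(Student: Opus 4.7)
I would follow the Maurey empirical method combined with a multi-scale entropy decomposition of $K$, the classical route pioneered by Carl for results of this type. The key input from the type-$p$ structure is the following empirical approximation fact: for any finite set $F=\{g_1,\ldots,g_N\}\subset B_R(0)\subset X$ and any integer $m\geq1$, the symmetric convex hull $\operatorname{co}(F)$ is contained in the $T_p R m^{-1+1/p}$-neighborhood of the finite set of ``empirical sums'' $\frac{1}{m}\sum_{\ell=1}^m\varepsilon_\ell g_{i_\ell}$ with $\varepsilon_\ell\in\{\pm1\}$ and $i_\ell\in\{1,\ldots,N\}$, a set of cardinality $(2N)^m$. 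I would prove this by expressing any $f\in\operatorname{co}(F)$ as an expectation $\mathbb{E}Y$ for a suitable $\pm F$-valued random variable $Y$ and applying the type-$p$ inequality to $m$ i.i.d.\ copies of $Y$.

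Next, I would decompose $K$ via its entropy numbers at dyadic scales. For each $j\geq 0$, fix a $\varepsilon_{2^j}(K)$-net $K_j\subset K$ with $|K_j|\leq 2^{2^j}$, and let $\pi_j:K\to K_j$ denote a nearest-point map, with $\pi_{-1}\equiv 0$. Every $g\in K$ then telescopes as $g=\sum_{j\geq 0}\Delta_j g$ where $\Delta_j g:=\pi_j g-\pi_{j-1}g$ has norm at most $C\varepsilon_{2^{j-1}}(K)\leq C 2^{-\alpha j}$ by hypothesis, and the finite set $D_j:=\{\Delta_j g:g\in K\}$ has cardinality at most $|K_j|\cdot|K_{j-1}|\leq 2^{2^{j+1}}$. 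Any $f\in\operatorname{co}(K)$ inherits a decomposition $f=\sum_{j\leq J}h_j+r_J$ with $h_j\in\operatorname{co}(D_j)$ and tail $\|r_J\|\lesssim 2^{-\alpha J}$. Applying the empirical fact to each $\operatorname{co}(D_j)$ with its own parameter $m_j$ contributes $\lesssim m_j\cdot 2^{j+1}$ bits to the total log-entropy budget for an error of $\lesssim 2^{-\alpha j} m_j^{-1+1/p}$.

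The main obstacle will be the multi-scale optimization. I would truncate at a level $J\asymp\log_2 n$ so that the tail $2^{-\alpha J}$ is of the desired order, and then pick the parameters $\{m_j\}_{j\leq J}$ by equating the marginal cost across scales (a Lagrangian in the budget constraint $\sum_j m_j 2^{j+1}\leq n$). The hypothesis $\alpha>1-1/p$ is precisely what ensures convergence of the geometric-type weight series appearing in this optimization. Carrying out the calculation should yield a total error of order $n^{-1+1/p} J^{-\alpha+1-1/p}$, which combined with $J\asymp\log_2 n$ gives exactly the claimed bound $\varepsilon_n(\operatorname{co}(K))\leq c(\alpha,p) n^{-1+1/p}(\log(n+1))^{-\alpha+1-1/p}$. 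The negative log exponent $-\alpha+1-1/p$ reflects the gain over the bare type-$p$ rate $n^{-1+1/p}$ afforded by the polynomial decay of $\varepsilon_n(K)$.
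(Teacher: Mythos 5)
First, note that the paper itself does not prove this lemma: it is imported verbatim as Proposition 6.2 of Carl--Kyrezi--Pajor, so your attempt can only be measured against that reference's argument. Your architecture (Maurey's empirical method in a type-$p$ space plus a dyadic chaining of $K$ through nets $K_j$ with difference sets $D_j$) is the right family of ideas and matches the general strategy of that literature. The genuine gap is in the final step: with the only quantitative input being the crude empirical bound ``error $\lesssim 2^{-\alpha j}m_j^{-1+1/p}$ at cost $\approx m_j 2^{j+1}$ bits,'' the multi-scale optimization does \emph{not} produce the claimed logarithmic gain. Write $q=1-1/p$ and let $k_j\approx m_j2^{j}$ be the bits spent at scale $j$; your model gives scale-$j$ error $\approx 2^{-\alpha j}(2^{j}/k_j)^{q}$ under the budget $\sum_j k_j\le n$. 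Since $\alpha>q$, the Lagrangian optimum concentrates essentially the whole budget at the coarsest scales and the optimal value is $\asymp n^{-q}=n^{-1+1/p}$, with no factor $(\log n)^{-\alpha+1-1/p}$. Even more simply: at $j=0$ the set $D_0$ has bounded cardinality and radius $\asymp\varepsilon_1(K)\asymp 1$, and the empirical bound can never certify an error below $\asymp m_0^{-q}\gtrsim n^{-q}$ there, while the target is smaller by the factor $(\log n)^{-(\alpha-1+1/p)}$. So ``carrying out the calculation'' as proposed yields only Carl's bare type-$p$ rate $n^{-1+1/p}$, not the lemma.

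What is missing is a sharper treatment of the coarse scales, where $\operatorname{co}(D_j)$ is the absolutely convex hull of few points and hence lies in a low-dimensional subspace: there one can cover it essentially exactly (a volumetric covering in $\operatorname{span}(D_j)$ costs about $|D_j|\log(R_j/\delta)$ bits for accuracy $\delta$, i.e.\ the error decays exponentially in $k_j/|D_j|$), which is far better than $m^{-q}$. Equivalently, one needs the sharp entropy estimate for hulls of $N$ points of norm $\le R$ in a type-$p$ space, $\varepsilon_k(\operatorname{co}\{x_1,\dots,x_N\})\lesssim_p R\,\bigl(k^{-1}\log(1+N/k)\bigr)^{1-1/p}$, together with its improvement for $k\gtrsim N$, rather than the weaker $R\,(k^{-1}\log N)^{1-1/p}$ implicit in your empirical step. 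Handling the scales with $2^{j}\lesssim\log n$ by such near-exact coverings at negligible cost and reserving Maurey for the remaining scales, the bottleneck becomes the first Maurey scale $2^{j_0}\asymp\log n$, whose error is $\asymp 2^{-\alpha j_0}(2^{j_0}/n)^{q}\asymp n^{-1+1/p}(\log(n+1))^{-\alpha+1-1/p}$; this is exactly where the claimed exponent comes from and is the core of the Carl--Kyrezi--Pajor proof. The hypothesis $\alpha>1-1/p$ does enter where you say it does, but only after this refinement; as written, your proposal establishes a strictly weaker statement.
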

In particular, the Hilbert space $X=H^1(\Omega)$ is of type $p=2$. As a result of Lemma \ref{entropyKcoK} and \eqref{epsilonuA}, when $d/2<s\leq d$ we have
\begin{equation}\label{epsilonncouA}
    \varepsilon_n({\rm co}(u(\mathcal{A})))_{H^1(\Omega)}\lesssim n^{-\frac{1}{2}}(\log(n+1))^{-\frac{s}{d}+\frac{1}{2}}.
\end{equation}
Therefore using Corollary \ref{sigmaepsiloncor} and \eqref{epsilonncouA} we obtain the convergence of the weak greedy algorithm for \eqref{Poisson}:
\begin{equation*}
d_n(u(\mathcal{A}))_{H^1(\Omega)}\leq\sigma_n(u(\mathcal{A}))_{H^1(\Omega)}\lesssim(\log(n+1))^{-\frac{s}{d}+\frac{1}{2}},
\end{equation*}
where the function class $\mathcal{A}$ is of lower regularity $s\in(d/2,d]$.

\subsubsection{Example 2}
In the second example, we consider the same boundary value problem \eqref{Poisson} on $\Omega=[0,1]^2$ with a different set of diffusion coefficients, which corresponds to DeVore's geometric model in Ref.~\citen{DeVore2014}. Let $$\mathcal{H}=\big\{\phi\in C^{k,\alpha}[0,1]: 0\leq\phi(x)\leq1\text{ for }x\in[0,1]\big\}.$$ 
The graph of a function $\phi\in\mathcal{H}$ partitions $\Omega$ into subregions $\Omega^\phi_+$ and $\Omega^\phi_-$, defined by
\begin{equation}
    \Omega^\phi_+ = \{(x,y)\in \Omega,~y \geq \phi(x)\}
\end{equation}
and $\Omega^\phi_- = \Omega\backslash\Omega^\phi_+$.
The diffusion coefficient $a$ is a member of 
\begin{equation}
    \mathcal{A}=\{a=a_\phi\in L_\infty(\Omega): a|_{\Omega^\phi_+}=2,~ a|_{\Omega^\phi_-}=1\text{ for }\phi\in\mathcal{H}\}.
\end{equation}
Functions in $\mathcal{A}$ have discontinuities along the curve represented by $\phi\in\mathcal{H}$, posing a real challenge for reducing the model problem \eqref{Poisson}. In fact, there is no convergence result of the RBM for the geometric model in the classical literature.

Elementary arguments show that for $q\geq1$ the mapping $\phi\mapsto a_\phi$ is $1/q$-H\"older continuous from $L_\infty[0,1]$ to $L_q(\Omega)$. Therefore using Lemma \ref{entropylemma} we can estimate the metric entropy of $\mathcal{A}$ as
\begin{equation}\label{epsilonC}
    \varepsilon_n(\mathcal{A})_{L_q(\Omega)}\leq \varepsilon_n(\mathcal{H})_{L_\infty[0,1]}^\frac{1}{q}\eqsim n^{-\frac{s}{q}}.
\end{equation}
Moreover, there exists an exponent $P>2$ depending on $\Omega$ such that $\nabla u(a)\in L_P(\Omega)$ (see Refs.~\citen{Grisvard1992} and \citen{BonitoDeVoreNochetto2013}). Then the perturbation result \eqref{purturbation} with $p=P$, $q=Q:=2P/(P-2)\in(2,\infty)$ implies that
$$u:L_Q(\Omega)\rightarrow H^1(\Omega),\quad a\mapsto u(a)$$
is a Lipschitz mapping. Then
using \eqref{epsilonC}
and Lemma \ref{entropylemma} again, we arrive at 
\begin{equation*}
\varepsilon_n(u(\mathcal{A}))_{H^1(\Omega)}\lesssim \varepsilon_n(\mathcal{A})_{L_Q(\Omega)}\lesssim n^{-\frac{s}{Q}}.
\end{equation*}
Combining it with Corollary \ref{sigmaepsiloncor} and Lemma \ref{entropyKcoK} shows that
\begin{equation*}
    \sigma_n(u(\mathcal{A}))_{H^1(\Omega)}\lesssim\log( n+1)^{-\frac{s}{Q}+\frac{1}{2}},\quad s>Q/2,
\end{equation*}
indicating the convergence of the RBM Algorithm \ref{WGA} with $K=u(\mathcal{A})$. In addition, we obtain the novel decay result of the $n$-width $$d_n(u(\mathcal{A}))_{H^1(\Omega)}\lesssim \log( n+1)^{-\frac{s}{Q}+\frac{1}{2}}.$$

\section{Convergence of RBMs in Banach spaces}\label{secBanach}
In this section, we derive direct comparison and convergence estimates for the weak greedy algorithm {(Algorithm \ref{WGA})}, where $X$ is only assumed to be a general Banach space. First we introduce the Banach-Mazur distance $d(X,Y)$ (see Ref.~\citen{Wojtaszczyk2015}) between two isomorphic Banach spaces $X$ and $Y$:
\begin{equation*}
    d(X,Y)=\inf\big\{\|T\|\|T^{-1}\|: T\text{ is an isomorphism from }X \text{ to }Y \big\}.
\end{equation*}
Let $\ell_2^n$ denote the space $\mathbb{R}^n$ under the Euclidean $\ell_2$-norm. For all $n\geq1$ we consider the number
\begin{equation}\label{gamman}
    {\delta}_n(X):=\sup_{Y_n\subset X}d(Y_n,\ell_2^n)\leq\sqrt{n},
\end{equation}
where the supremum is taken over all $n$-dimensional Banach subspaces $Y_n$ in $X$, and the upper bound in \eqref{gamman} has been {proven} in {Section} III.B.9 of Ref.~\citen{Wojtaszscyk1991}. The quantity ${\delta}_n(X)$ measures how far an $n$-dimensional subspace of $X$ is away from the Hilbert space $\ell_2^n$. With the help of \eqref{gamman}, we are able to establish the key lemma for our analysis in Banach spaces.
\begin{lemma}\label{mainlemmaBanach}
Let $K$ be a compact set in a Banach space $X$. For any $v_1, \ldots, v_n\in K$ with $X_k={\rm span}\{v_1,\ldots,v_k\}$ and $X_0=\{0\}$, we have
\begin{equation*}
    \Big(\prod_{k=1}^n\operatorname{dist}(v_k,X_{k-1})\Big)^\frac{1}{n}\leq{\delta}_n(X)(n!V_n)^{\frac{1}{n}}\varepsilon_n({\rm co}(K))_X.
\end{equation*}
\end{lemma}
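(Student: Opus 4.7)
The plan is to emulate the proof of Lemma \ref{mainlemma} after transferring the finite-dimensional subspace $X_n=\operatorname{span}\{v_1,\ldots,v_n\}$ to the Hilbert space $\ell_2^n$ via a nearly optimal Banach--Mazur isomorphism. Without loss of generality assume $v_1,\ldots,v_n$ are linearly independent, as otherwise some $\operatorname{dist}(v_k,X_{k-1})$ vanishes and the inequality holds trivially. For any $\epsilon>0$, choose an isomorphism $T\colon X_n\to\ell_2^n$ with $\|T\|\,\|T^{-1}\|<d(X_n,\ell_2^n)+\epsilon\leq\delta_n(X)+\epsilon$, and rescale so that $\|T^{-1}\|=1$; this forces $\|v\|_X\leq\|Tv\|_{\ell_2^n}$ for every $v\in X_n$.

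The first half of the argument is then a direct transplant of the Hilbert-space calculation from Lemma \ref{mainlemma}. Let $S=\operatorname{co}(\{v_1,\ldots,v_n\})\subset X_n$. Since $T$ is linear, $TS=\operatorname{co}(\{Tv_1,\ldots,Tv_n\})\subset\ell_2^n$, and the same Gram--Schmidt identity used to derive \eqref{svol}, now applied in $\ell_2^n$, gives
\begin{equation*}
|TS|_{\ell_2^n}=\frac{2^n}{n!}\prod_{k=1}^n\operatorname{dist}_{\ell_2^n}(Tv_k,TX_{k-1}).
\end{equation*}
The normalization $\|T^{-1}\|=1$ implies $\operatorname{dist}_{\ell_2^n}(Tv_k,TX_{k-1})\geq\operatorname{dist}_X(v_k,X_{k-1})$, so
\begin{equation*}
\prod_{k=1}^n\operatorname{dist}_X(v_k,X_{k-1})\leq\frac{n!}{2^n}|TS|_{\ell_2^n}\leq\frac{n!}{2^n}\bigl|T(\operatorname{co}(K)\cap X_n)\bigr|_{\ell_2^n},
\end{equation*}
since $S\subset\operatorname{co}(K)\cap X_n$. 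I then cover $\operatorname{co}(K)$ by $2^n$ $X$-balls of radius $\varepsilon_n=\varepsilon_n(\operatorname{co}(K))_X$ and intersect each with $X_n$, producing $2^n$ subsets whose $X$-diameter is at most $2\varepsilon_n$, and whose images under $T$ have Euclidean diameter at most $2\|T\|\varepsilon_n$.

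The main obstacle is converting these diameter bounds into a volume bound with the correct constant. A naive ``radius equals diameter'' estimate would insert an extraneous $2^n$, ultimately producing $2\delta_n(X)$ in the conclusion and breaking consistency with Lemma \ref{mainlemma} in the Hilbert case where $\delta_n=1$. The clean resolution is the classical isodiametric inequality in $\mathbb{R}^n$, asserting that any set of Euclidean diameter $d$ has Lebesgue measure at most $V_n(d/2)^n$. Applying it to each of the $2^n$ pieces upgrades the total bound to $|T(\operatorname{co}(K)\cap X_n)|_{\ell_2^n}\leq 2^nV_n\|T\|^n\varepsilon_n^n$. Substituting gives
\begin{equation*}
\prod_{k=1}^n\operatorname{dist}_X(v_k,X_{k-1})\leq n!V_n\bigl(\|T\|\,\|T^{-1}\|\bigr)^n\varepsilon_n^n.
\end{equation*}
Taking $n$-th roots and sending $\epsilon\to 0$ in the choice of $T$ delivers the stated inequality, with consistency with $\delta_n=1$ in the Hilbert case serving as a sanity check that the isodiametric step is the right tool.
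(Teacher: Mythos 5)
Your proof is correct and follows essentially the same route as the paper: reduce to the Hilbert case via a (near-)optimal Banach--Mazur isomorphism $T:X_n\to\ell_2^n$ and then run the volume comparison of Lemma \ref{mainlemma}. The only differences are minor: you take an $\epsilon$-approximately optimal $T$ rather than assuming the infimum defining $\delta_n(X)$ is attained, and you redo the volume bound directly in $\ell_2^n$ via the isodiametric inequality (which cleanly handles covering balls whose centers may lie outside $X_n$), whereas the paper applies Lemma \ref{mainlemma} in the auxiliary Hilbert space $X_{n,T}$ and compares entropy numbers through the norm equivalence $\varepsilon_n(S)_{X_{n,T}}\leq\|T\|\,\varepsilon_n(S)_X$.
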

\begin{proof}
Let $T$ be an isomorphism from $X_n$ to $\ell_2^n$ and $$\|v\|_T:=\|Tv\|_{\ell_2^n}\text{ for any }v\in X_n.$$
Then we have the norm equivalence
\begin{equation}\label{normequiv}
    \|T^{-1}\|^{-1}\|v\|_X\leq\|v\|_T\leq\|T\|\|v\|_X,\quad\forall v\in X_n.
\end{equation}
In addition,
by the definition of $\delta_n(X)$ we can choose $T$ such that
\begin{equation}\label{TTinverse}
    \|T\|\|T^{-1}\|={\delta}_n(X).
\end{equation}
Let $X_{n,T}$ denote the Hilbert space, which is the same as $X_n$ as a set, and is equipped with the inner product $$(\bullet,\bullet)_T:=(T\bullet,T\bullet)_{\ell_2^n}.$$  
As a result of \eqref{normequiv} it holds that
\begin{equation}\label{distk}
    \operatorname{dist}(v_k,X_{k-1})\leq\|T^{-1}\|\|v_k-Q_{k-1}v_k\|_T,
\end{equation}
where $Q_k$ is the orthogonal projection onto $X_k$ with respect to $(\bullet,\bullet)_T$. Let $S={\rm co}\big\{v_1,\ldots,v_n\big\}.$
Then applying our previous estimate in Lemma \ref{mainlemma} to the Hilbert space $X_{n,T}$ and using \eqref{distk}, we have 
\begin{equation}\label{Banach1}
    \begin{aligned}
        \left(\prod_{k=1}^n\operatorname{dist}(v_k,X_{k-1})\right)^\frac{1}{n}&\leq\|T^{-1}\|\left(\prod_{k=1}^n\|v_k-Q_{k-1}v_k\|_T\right)^\frac{1}{n}\\
        &\leq\|T^{-1}\|(n!V_n)^\frac{1}{n}\varepsilon_n(S)_{X_{n,T}}.
    \end{aligned}
\end{equation}
On the other hand, the lower bound in \eqref{normequiv} leads to 
\begin{equation}\label{Banach2}
    \varepsilon_n(S)_{X_{n,T}}\leq\|T\|\varepsilon_n(S)_X\leq\|T\|\varepsilon_n({\rm co}(K))_X.
\end{equation}
Therefore combining \eqref{Banach1} with \eqref{Banach2} and \eqref{TTinverse} completes the proof.
\end{proof}
The estimate \eqref{gamman} on the Banach-Mazur distance can be improved in special Banach spaces like $L_p(d\mu)$ with $1 < p<\infty$. For instance, it has been shown in Ref.~\citen{Wojtaszscyk1991} that
\begin{equation}\label{gammanp}
    {\delta}_n(X)\leq n^{|\frac{1}{2}-\frac{1}{p}|}\quad\text{ for }X=L_p(d\mu),~p\in[1,\infty].
\end{equation}

Following the same proof of Theorem \ref{sigmaepsilonthm}, we  obtain the following error bounds of Algorithm \ref{WGA} in Banach spaces by Lemma \ref{mainlemmaBanach}.
\begin{theorem}\label{sigmaepsilonBanachthm}
Let $K$ be a compact set in a Banach space $X$. For the weak greedy algorithm {(Algorithm \ref{WGA})} we have
\begin{equation*}
    \sigma_n(K)_X\leq\gamma^{-1}{\delta}_n(X)(n!V_n)^\frac{1}{n}\varepsilon_n({\rm co}(K))_X.
\end{equation*}
In particular, the asymptotic error estimates hold:
\begin{equation*}
    \sigma_n(K)_X\lesssim\left\{\begin{aligned}
    &\gamma^{-1}n\varepsilon_n({\rm co}(K))_X,\quad\text{ for a general Banach space }X,\\
    &\gamma^{-1}n^{\frac{1}{2}+|\frac{1}{2}-\frac{1}{p}|}\varepsilon_n({\rm co}(K))_X,\quad X=L_p(d\mu),~p\in[1,\infty].
\end{aligned}\right.
\end{equation*}
\end{theorem}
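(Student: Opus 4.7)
The plan is to mirror the proof of Theorem \ref{sigmaepsilonthm} verbatim, substituting Lemma \ref{mainlemmaBanach} for Lemma \ref{mainlemma}. The previous proof used only two ingredients beyond the Hilbert-space entropy lemma: (i) the weak-greedy selection rule, which reads $\operatorname{dist}(f_n, X_{n-1}) \geq \gamma \sigma_{n-1}(K)$ and is also bounded above by $\sigma_{n-1}(K)$, and (ii) the monotonicity $\sigma_0(K) \geq \sigma_1(K) \geq \cdots \geq \sigma_n(K)$. Both ingredients are stated in purely metric terms, so they transfer unchanged to a general Banach space.

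Concretely, I would first apply the weak-greedy inequality to rewrite
\[
\gamma \sigma_{k-1}(K) \leq \operatorname{dist}(f_k, X_{k-1})
\]
for every $k = 1, \dots, n$, then take the geometric mean and invoke Lemma \ref{mainlemmaBanach} with the iterates $v_k = f_k$ to obtain
\[
\Bigl(\prod_{k=1}^n \sigma_{k-1}(K)\Bigr)^{1/n} \leq \gamma^{-1} \Bigl(\prod_{k=1}^n \operatorname{dist}(f_k, X_{k-1})\Bigr)^{1/n} \leq \gamma^{-1} \delta_n(X) (n! V_n)^{1/n} \varepsilon_n(\operatorname{co}(K))_X.
\]
Finally, the decreasing nature of $k \mapsto \sigma_k(K)$ forces $\sigma_n(K)^n \leq \prod_{k=1}^n \sigma_{k-1}(K)$, which upon taking $n$-th roots gives the displayed bound.

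For the asymptotic statements, I would combine two facts quoted earlier in the paper. Stirling's formula (as used right before Corollary \ref{sigmaepsiloncor}) yields $(n! V_n)^{1/n} \lesssim \sqrt{n}$. For a general Banach space $X$ the trivial bound \eqref{gamman} gives $\delta_n(X) \leq \sqrt{n}$, producing the overall factor $\sqrt{n} \cdot \sqrt{n} = n$. When $X = L_p(d\mu)$ with $p \in [1,\infty]$, the sharper estimate \eqref{gammanp} gives $\delta_n(X) \leq n^{|1/2 - 1/p|}$, producing the overall factor $n^{1/2 + |1/2 - 1/p|}$. Both cases follow immediately.

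There is no serious obstacle in this argument; the only nontrivial work, namely transplanting the volume comparison to a non-Hilbertian setting via the Banach--Mazur distance, has already been carried out in Lemma \ref{mainlemmaBanach}. The one small point to be careful about is the dependence of the isomorphism $T$ on the subspace $X_n = \operatorname{span}\{f_1, \dots, f_n\}$: Lemma \ref{mainlemmaBanach} is applied once to this fixed $n$-dimensional subspace spanned by the weak-greedy iterates, so the constant $\delta_n(X)$ entering the bound genuinely depends only on the ambient space $X$ and not on $K$ or the iterates.
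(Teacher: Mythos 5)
Your proposal is correct and follows essentially the same route as the paper, which itself proves this theorem by repeating the argument of Theorem \ref{sigmaepsilonthm} with Lemma \ref{mainlemmaBanach} in place of Lemma \ref{mainlemma}: the weak-greedy inequality $\gamma\sigma_{k-1}(K)\leq\operatorname{dist}(f_k,X_{k-1})$, the geometric-mean bound from the lemma, and the monotonicity of $\sigma_k(K)$, followed by Stirling's formula together with the bounds \eqref{gamman} and \eqref{gammanp} on $\delta_n(X)$ for the asymptotic statements. Your closing remark that $\delta_n(X)$ depends only on the ambient space (being a supremum over all $n$-dimensional subspaces) is also the correct reading of the lemma.
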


As a byproduct of \eqref{dnsigman} and Theorem \ref{sigmaepsilonBanachthm}, we also have a direct comparison between the Kolmogorov $n$-width and entropy number in general Banach spaces, which generalizes Proposition 2 from Ref.~\citen{JonathanXuFoCM} for Hilbert spaces.
\begin{corollary}
For a compact set $K$ in a Banach space $X$ we have
\begin{equation*}
    d_n(K)_X\leq{\delta}_n(X)(n!V_n)^\frac{1}{n}\varepsilon_n({\rm co}(K))_X,\quad n\geq1.
\end{equation*}
\end{corollary}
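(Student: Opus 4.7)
The plan is to derive this bound as an immediate consequence of two facts already at our disposal, so essentially no new work is needed. The first ingredient is the elementary inequality \eqref{dnsigman}, namely $d_n(K)_X \le \sigma_n(K)_X$, which holds because $\sigma_n(K)_X$ measures the worst-case distance from $K$ to the specific $n$-dimensional subspace $X_n$ produced by the greedy procedure, while $d_n(K)_X$ is the infimum over all $n$-dimensional subspaces. The second ingredient is Theorem \ref{sigmaepsilonBanachthm} applied to Algorithm \ref{GA}, i.e.\ to Algorithm \ref{WGA} with the weak-greedy parameter $\gamma = 1$, which yields
\begin{equation*}
    \sigma_n(K)_X \le {\delta}_n(X)(n!V_n)^{1/n}\varepsilon_n({\rm co}(K))_X.
\end{equation*}

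Chaining these two inequalities produces the claimed bound. The only thing worth pointing out in the write-up is that specializing to $\gamma = 1$ eliminates the factor $\gamma^{-1}$ that appears in the general statement of Theorem \ref{sigmaepsilonBanachthm}; this is allowed since $d_n(K)_X$ is an intrinsic width of $K$, so we are free to pick any admissible weak-greedy constant to derive the purely geometric comparison.

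There is no real obstacle, as the essential analytic work has been carried out in Lemma \ref{mainlemmaBanach} and Theorem \ref{sigmaepsilonBanachthm}. As a consistency check, in the Hilbert-space case we have $\delta_n(X) = 1$, so the corollary recovers the Hilbert-space comparison between the Kolmogorov $n$-width and the entropy number of the symmetric convex hull noted just above the statement (cf.\ Ref.~\citen{JonathanXuFoCM}), and the extra factor $\delta_n(X)$ faithfully records the cost of passing from a Hilbert space to a general Banach space via the Banach--Mazur distance.
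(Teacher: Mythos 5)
Your proposal is correct and follows exactly the paper's intended argument: the corollary is obtained by chaining the trivial bound $d_n(K)_X\leq\sigma_n(K)_X$ from \eqref{dnsigman} with Theorem \ref{sigmaepsilonBanachthm} applied with $\gamma=1$ (Algorithm \ref{GA}), which removes the factor $\gamma^{-1}$. No gaps.
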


In the end of this section, we shall show that the factor $n^{\frac{1}{2}+|\frac{1}{2}-\frac{1}{p}|}$ in Theorem \ref{sigmaepsilonBanachthm} cannot be removed when $p\geq2$. To that end, we need the following lemma concerning the metric entropy of the $\ell_1^n$ unit ball in $\ell_p$. We remark that we do not know whether Theorem \ref{sigmaepsilonBanachthm} is tight when $1\leq p < 2$.
\begin{lemma}[Theorem 1 from Ref.~\citen{Schutt1984}]\label{Schuttlemma}
    Let $B^1_m$ be an $\ell_1$-unit ball in $\mathbb{R}^m$. For $m/2<s<m$ and $1\leq p\leq\infty$ it holds that
    \begin{equation*}
    \varepsilon_s(B^1_m)_{\ell_p}\eqsim \log(m/s) s^{-1+\frac{1}{p}}.
    \end{equation*}
\end{lemma}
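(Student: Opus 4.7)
The plan is to establish the upper and lower bounds of $\varepsilon_s(B^1_m)_{\ell_p} \eqsim \log(m/s) s^{-1+1/p}$ separately, following the classical strategy for entropy numbers of finite-dimensional $\ell_1$ balls: constructing an explicit efficient $\varepsilon$-net for the upper bound, and extracting a large packing for the lower bound, with the parameter $k$ (sparsity level) related to $s$ by a balancing equation of the form $k\log(m/k) \asymp s$.

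For the upper bound, I would use sparse quantization. Given $v \in B_m^1$, I would approximate $v$ by rounding each coordinate $v_i$ to the nearest multiple of some $\delta > 0$. Since $\sum_i |v_i| \leq 1$, at most $k := \lfloor 1/\delta \rfloor$ coordinates exceed $\delta/2$ in absolute value, so the rounded vector $\tilde v$ has support contained in a set of size at most $k$. The number of such sparse quantized vectors is bounded by $\binom{m}{k} \cdot (2/\delta)^k$, whose logarithm is of order $k\log(m/k)$. Setting this equal to $s$ gives $k \asymp s/\log(m/s)$ and hence $\delta \asymp \log(m/s)/s$. The $\ell_p$-error $\|v-\tilde v\|_{\ell_p}$ is controlled by splitting into a truncation error (over small coordinates, using $|v_i|^{p-1} \leq (\delta/2)^{p-1}$ and $\sum|v_i|\leq 1$) and a quantization error (at most $k$ terms each bounded by $\delta^p$). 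Both contributions yield $\|v-\tilde v\|_{\ell_p} \lesssim \delta^{1-1/p}$, which combined with the choice of $\delta$ delivers the desired upper bound (after the appropriate adjustment to optimize the logarithmic factor).

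For the lower bound, I would construct a packing of $B_m^1$ using the sign patterns
\begin{equation*}
v_{I,\epsilon} = \frac{1}{k}\sum_{i\in I}\epsilon_i e_i,\quad I\subset\{1,\ldots,m\},\ |I|=k,\ \epsilon\in\{-1,+1\}^{I},
\end{equation*}
each of which lies in $B_m^1$. For two distinct pairs $(I,\epsilon)$, $(I',\epsilon')$, the $\ell_p$-distance is $k^{-1}$ times the $\ell_p$-norm of a vector whose coordinates are in $\{-1,0,1,2\}$ and has support of size proportional to the combined Hamming distance. A Gilbert--Varshamov-style selection then extracts a subcollection of cardinality comparable to $\binom{m}{k}2^k$ whose pairwise $\ell_p$-distances are $\gtrsim k^{-1+1/p}$. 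Taking logarithms, $k\log(m/k) \asymp s$ again dictates the choice of $k$, and the pairwise separation then translates to the lower bound $\varepsilon_s(B_m^1)_{\ell_p} \gtrsim k^{-1+1/p}$.

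The main obstacle is the matching of upper and lower bounds \emph{with the correct logarithmic exponent}, and in particular handling the end of the range $s \to m$, where $\log(m/s)$ degenerates. For the upper bound, naively bounding both the truncation and quantization layers on a single scale $\delta$ tends to give a slightly suboptimal log power, and a two-scale (dyadic) quantization, in which coordinates of different magnitudes are quantized on finer grids, is typically needed to achieve the tight constant. For the lower bound, the key technical step is the Gilbert--Varshamov argument certifying that exponentially many of the sign patterns remain mutually well-separated. I would also rely on Carl's inequality (Lemma \ref{Carlthm}) together with known polynomial estimates for $d_n(B_m^1)_{\ell_p}$ as a sanity check on the final exponents.
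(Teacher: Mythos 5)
The paper never proves this lemma --- it is quoted (in fact, loosely transcribed) from Sch\"utt's Theorem 1 --- so any self-contained argument like yours is by construction a different route; your outline (sparse quantization for the covering, sign-pattern packing with $k\log(m/k)\eqsim s$ for the lower bound) is the standard proof strategy and is essentially workable. However, the form you should, and can, prove is Sch\"utt's actual statement $\varepsilon_s(B^1_m)_{\ell_p}\eqsim\bigl(\log(m/s+1)/s\bigr)^{1-1/p}$: the logarithm carries the same exponent $1-1/p$ as $1/s$, and it is $\log(m/s+1)$, not $\log(m/s)$. The version printed in the lemma (log to the first power, on the range $m/2<s<m$) cannot hold as an equivalence: on that range $\log(m/s+1)\eqsim 1$, so the true order is simply $s^{-1+1/p}$, whereas the printed factor $\log(m/s)$ tends to $0$ as $s\to m$ and would contradict the volume lower bound $\varepsilon_s(B^1_m)_{\ell_p}\geq 2^{-s/m}\bigl(\mathrm{vol}(B^1_m)/\mathrm{vol}(B^p_m)\bigr)^{1/m}\eqsim m^{-1+1/p}$. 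Consequently your concern about needing a two-scale quantization ``to optimize the logarithmic factor'' is misdirected: the single-scale scheme already yields the sharp exponent $1-1/p$ on the log, your packing lower bound produces exactly the same power, and no argument will produce the printed one. (Proving the Sch\"utt form is also all the paper needs, since in the proposition where the lemma is invoked any fixed power of $\log(m/s)$ is absorbed by the convergent geometric series.)

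The one genuine gap in your sketch is the counting step. The bound $\binom{m}{k}(2/\delta)^k$ quantizes coordinates independently and ignores the global constraint $\|\tilde v\|_{\ell_1}\lesssim 1$; the spurious term $k\log(1/\delta)\eqsim k\log k$ it introduces is fatal precisely in the lemma's stated regime $s\eqsim m$, where $k\eqsim m$ and the count becomes $2^{\Theta(m\log m)}\gg 2^s$, and even for smaller $s$ it degrades the admissible $\delta$ to $\eqsim(\log m)/s$ instead of $\eqsim\log(em/s)/s$. The fix is standard: count points of $\delta\mathbb{Z}^m$ with at most $k$ nonzero coordinates and $\ell_1$-norm at most $2$, which by a stars-and-bars argument is at most $\binom{m}{k}C^{k}$ (using $k\eqsim 1/\delta$), so the log of the count is $\eqsim k\log(em/k)$ as you intended; at the endpoint $s\eqsim m$ simply take $\delta=A/m$ with $A$ a large absolute constant. (Also, at most $2/\delta$, not $1/\delta$, coordinates exceed $\delta/2$ --- harmless.) For the lower bound, a separated family of supports alone (all signs $+1$), with pairwise symmetric difference $\gtrsim k$, already has cardinality exponential in $k\log(em/k)$ and mutual $\ell_p$-distance $\gtrsim k^{-1+1/p}$, so the full sign-pattern Gilbert--Varshamov machinery is optional; note that this packing also degenerates as $s\to m$, where the volume comparison above supplies the matching lower bound. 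Carl's inequality is not needed anywhere in the argument.
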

\begin{proposition}
There exists a symmetric and convex compact set $K$ in the Banach space $X=\ell_p$ with $2\leq p\leq\infty$ such that Algorithm \ref{GA} satisfies
\begin{equation}\label{sigmaepsilonequivalence}
    \sigma_n(K)_X\eqsim n^{1-\frac{1}{p}}\varepsilon_n(K)_X.
\end{equation}
\end{proposition}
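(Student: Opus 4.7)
The plan is to construct $K$ as a dyadically blocked diagonal image of the $\ell_1$-unit ball inside $\ell_p$, arranged so that $\sigma_n(K)$ and $\varepsilon_n(K)$ both admit matching polynomial rates whose ratio is exactly $n^{1-1/p}$. Lemma~\ref{Schuttlemma} identifies $B^1_m$ inside $\ell_p$ as a natural extremizer---its entropy decays only as $s^{-(1-1/p)}$ at scale $s\eqsim m$ while every extreme point has full $\ell_p$-norm---so the construction stitches together scaled copies of $B^1_m$ across many dyadic scales. Fix $\beta>0$, partition $\mathbb{N}$ into dyadic blocks $G_k=\{2^k,\ldots,2^{k+1}-1\}$, set $s_i=2^{-\beta k}$ for $i\in G_k$, and let
\[
K=\overline{\mathrm{co}}\{\pm s_i e_i:\, i\ge 1\}\subset \ell_p.
\]
Since $s_i\to 0$, $K$ is compact; it is visibly symmetric and convex.

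The greedy analysis is short. The extreme points of $K$ are $\pm s_i e_i$, and whenever $X_n$ is spanned by previously selected basis vectors $\{e_{i_1},\ldots,e_{i_n}\}$, disjointness of supports in $\ell_p$ gives $\mathrm{dist}(\pm s_j e_j, X_n)=s_j$ for every unchosen $j$. So Algorithm~\ref{GA} picks extreme points in order of decreasing weight, exhausting $G_0$, then $G_1$, and so on, which yields $\sigma_n(K)\eqsim n^{-\beta}$. For the entropy lower bound, $K$ contains $2^{-\beta K}B^1_{2^K}$ as an isometric copy supported on $G_K$, so Lemma~\ref{Schuttlemma} with $m=2^K$ and $s=n\eqsim 2^{K-1}$ gives $\varepsilon_n(K)\gtrsim 2^{-\beta K}\, n^{-(1-1/p)}\eqsim n^{-\beta-(1-1/p)}$.

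For the matching entropy upper bound---the main obstacle---I would decompose $K\subset\sum_k E_k$ (Minkowski sum) with $E_k=2^{-\beta k}\cdot\{v:\mathrm{supp}\,v\subset G_k,\,\|v\|_1\le 1\}$, and use $\varepsilon_{\sum n_k}(\sum_k E_k)\le\sum_k\varepsilon_{n_k}(E_k)$ together with the exponentially-decaying companion $\varepsilon_n(B^1_m)_{\ell_p}\lesssim 2^{-n/(2m)}\, m^{-(1-1/p)}$ valid for $n\ge 2m$ (the classical complement of Lemma~\ref{Schuttlemma}, e.g.~from Edmunds and Triebel). The delicate step is the allocation across blocks: low-$k$ blocks have small dimension but diameter of order $1$, so they must receive disproportionately many balls (roughly $(K-k)$ times their dimension) to absorb a spurious logarithmic factor. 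With $n_k\eqsim 2^{k+1}(K-k+1)$ for $k\le K$ and $n_k=0$ for $k>K$, one has $\sum n_k\eqsim 2^K$, every summand is $\lesssim 2^{-K(\beta+1-1/p)}$, and the geometric series totals $\lesssim n^{-\beta-(1-1/p)}$. Together with $\sigma_n\eqsim n^{-\beta}$ this yields $\sigma_n/\varepsilon_n\gtrsim n^{1-1/p}$; the matching upper bound is the $L_p$ case of Theorem~\ref{sigmaepsilonBanachthm} (using $K=\mathrm{co}(K)$), completing the equivalence.
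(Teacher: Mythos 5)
Your construction and overall strategy are essentially the paper's: weighted coordinate vectors with weights constant on dyadic blocks, the observation that the greedy algorithm picks them in order so that $\sigma_n(K)\eqsim n^{-\beta}$, an entropy upper bound obtained by splitting $K$ blockwise and using subadditivity of entropy under Minkowski sums, and Theorem \ref{sigmaepsilonBanachthm} (with $K=\mathrm{co}(K)$) for the direction $\sigma_n\lesssim n^{1-1/p}\varepsilon_n$. The genuine gap is in the entropy upper bound: you assign zero covering budget to every block beyond the cutoff level ($k>K$ in your notation, $n\eqsim 2^K$). In the subadditivity estimate those blocks then contribute their radii, and $\sum_{k>K}2^{-\beta k}\eqsim 2^{-\beta K}\eqsim n^{-\beta}$, which is far larger than the target $n^{-\beta-(1-1/p)}$; so the claim that ``every summand is $\lesssim 2^{-K(\beta+1-1/p)}$'' fails exactly where the estimate is hardest. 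Those high blocks have dimension exceeding any admissible budget, so the exponential-regime bound $\varepsilon_s(B^1_m)\lesssim 2^{-s/(2m)}m^{-(1-1/p)}$ (valid for $s\gtrsim m$) that your allocation relies on is unavailable for them; what is needed there is the sparse regime (budget $\ll$ dimension), i.e.\ precisely the $\log(m/s)\,s^{-(1-1/p)}$ bound of Lemma \ref{Schuttlemma} combined with the decay of the weights. This is how the paper proceeds: a single volumetric estimate for the first $\eqsim n$ coordinates, sub-dimensional budgets $2^{n-k}$ for the blocks between $2^n$ and $2^{2n}$ via Lemma \ref{Schuttlemma}, and one ball for the far tail; that argument is what forces the weight exponent to satisfy $\alpha>1-1/p$ (the paper takes $\alpha>1$).

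There is also a parameter inconsistency in the part you did write: with $n_k\eqsim 2^{k+1}(K-k+1)$ the $k$-th summand is $\lesssim 2^{-\beta k-k(1-1/p)-(K-k)}$, which stays below $2^{-K(\beta+1-1/p)}$ and sums geometrically to it only when $\beta<1/p$ (for $\beta>1/p$ already the $k=0$ term, of size $\eqsim 2^{-K}$, exceeds the target). But repairing the neglected tail along the paper's lines requires the weights to decay fast enough, $\beta>1-1/p\geq\tfrac12\geq 1/p$ for $p\geq 2$, so the two halves of your argument as written admit no common choice of $\beta$. Both defects are repairable---enlarge the low-block budgets by a $\beta$-dependent factor, and cover roughly $\mathrm{const}\cdot K$ further blocks with geometrically decaying sub-dimensional budgets via the sparse-regime bound before using the trivial radius bound on the remainder---but that bookkeeping (or simply the paper's choice $\alpha>1$ together with its block decomposition) is exactly the missing content. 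The remaining pieces of your argument (the greedy selection giving $\sigma_n\eqsim n^{-\beta}$, the reduction of the equivalence to the entropy upper bound plus Theorem \ref{sigmaepsilonBanachthm}, and the entropy lower bound, which is correct but not actually needed) are fine.
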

\begin{proof}
Let $\{x_i\}_{i=1}^\infty$ be a sequence of real numbers decreasing to 0. We consider the compact set 
$$K={\rm co}\big(\cup_{i=1}^\infty\{f_i\}\big)\subset X=\ell_p$$ where $f_i=x_ie_i$ and $e_i$ is the $i$-th unit vector in $\ell_p.$ Clearly the greedy algorithm \ref{GA} selects $f_1, f_2, \ldots, f_n, \ldots$ as the iterates and
\begin{equation}\label{sigmanK}
    \sigma_n(K)=\|f_n\|_{\ell_p}=|x_n|.
\end{equation}
Given an exponent $\alpha>1$, let $x_j=2^{-k\alpha}$ for $2^{k-1}\leq j\leq 2^k-1$. By construction $x_n=\mathcal{O}(n^{-\alpha})$ and we shall  show that \begin{equation}\label{epsilonN}
    \varepsilon_N(K)_{\ell_p}=\mathcal{O}(N^{-1+\frac{1}{p}-\alpha}) \text{ for }N=3\cdot2^{n}.
\end{equation} 
We consider the following subsets of $K$: 
\begin{align*}
K_0&={\rm co}\big(\cup_{i=1}^{2^n}\{f_i\}\big),\\
K_k&={\rm co}\big(\cup_{i=2^{n+k-1}}^{2^{n+k}-1}\{f_i\}\big),\quad 1\leq k\leq n,\\
K_\infty&={\rm co}\big(\cup_{i=2^{2n}}^{\infty}\{f_i\}\big).
\end{align*}
Using the elementary fact
$\varepsilon_{s+t}(A+B)\leq\varepsilon_s(A)+\varepsilon_t(B)$ (see~Refs.~\citen{Lorentz1996} and \citen{JonathanXuFoCM}) we have
\begin{equation}\label{epsilonsplit}
    \varepsilon_{N}(K)_{\ell_p}\leq\varepsilon_{2^n}(K_0)_{\ell_p}+\sum_{k=1}^n\varepsilon_{2^{n-k}}(K_k)_{\ell_p}+\varepsilon_{2^n}(K_\infty)_{\ell_p}.
\end{equation}
Using the formula for the volume of an $\ell_p$ ball of radius $\varepsilon$ in $\mathbb{R}^m$ (see Refs.~\citen{Dirichlet1839} and \citen{Wang2005})
\begin{equation*}
    V^p_m(\varepsilon)=2^m\varepsilon^m\frac{\Gamma(1+\frac{1}{p})^m}{\Gamma(1+\frac{m}{p})},
\end{equation*}
we estimate $\varepsilon_{m}(K_0)_{\ell_p}$ by
calculating the volume of $K_0$, with $m=2^n$:
\begin{equation}\label{K0}
    \begin{aligned}
        \varepsilon_{m}(K_0)_{\ell_p}\eqsim\left(\frac{2^m}{m!}|x_1|\cdots|x_m|\frac{\Gamma(1+\frac{m}{p})}{2^m\Gamma(1+\frac{1}{p})^m}\right)^{\frac{1}{m}}\eqsim m^{-1+\frac{1}{p}-\alpha}.
    \end{aligned}
\end{equation}
For $1\leq k\leq n$, Lemma \ref{Schuttlemma} implies  that
    \begin{equation*}
    \begin{aligned}
        \varepsilon_{2^{n-k}}(K_k)_{\ell_p}&\lesssim2^{-(n+k)\alpha}\varepsilon_{2^{n-k}}\big(B^1_{2^{n+k}}\big)_{\ell_p}\\
        &\lesssim2^{-(n+k)\alpha}\sqrt{k} 2^{(-1+\frac{1}{p})(n-k)}\\
        &\lesssim\sqrt{k}2^{-k(\alpha-1)}2^{-n(\alpha+1-\frac{1}{p})}.
    \end{aligned}
    \end{equation*}
As a result, we obtain the following bound
\begin{equation}\label{Kk}
\sum_{k=1}^n\varepsilon_{2^{n-k}}(K_k)_{\ell_p}\lesssim\sum_{k=1}^\infty\left(\sqrt{k}2^{-k(\alpha-1)}\right)2^{-n(\alpha+1-\frac{1}{p})}\lesssim2^{-n(\alpha+1-\frac{1}{p})}.
    \end{equation}
Since the $\ell_p$ distance from 0 to the elements of $K_\infty$ is less than $2^{-2n\alpha}$, one can simply estimate $\varepsilon_{2^n}(K_\infty)_{\ell_p}$ by 
\begin{equation}\label{Kinf}
    \varepsilon_{2^n}(K_\infty)_{\ell_p}\lesssim 2^{-2n\alpha}.
\end{equation}
Therefore combining \eqref{K0}, \eqref{Kk} and \eqref{Kinf}, we confirm \eqref{epsilonN} and thus \begin{equation}\label{epsilonalphap1}
    \varepsilon_n(K)_{\ell_p}\lesssim n^{-\alpha-1+\frac{1}{p}}\quad\text{ for }n\geq1.
\end{equation} 
Finally \eqref{sigmaepsilonequivalence} with $p\geq2$ follows from Theorem \ref{sigmaepsilonBanachthm} and \eqref{sigmanK}, \eqref{epsilonalphap1}. 
\end{proof}

\section{Convergence analysis of OGAs}\label{secOGA}
Let $X$ be a real Hilbert space and
$\mathbb{D}\subset X$ be a fixed dictionary or collection of functions.
To analyze the convergence behavior of the OGA, we consider the following norm and space \cite{DeVoreTemlyakov1996,BarronCohenDahmenDeVore2008}:
\begin{align*}
    \|f\|_{\mathcal{L}_1(\mathbb{D})}&:=\inf\left\{\sum_i|c_i|: f=\sum_{g_i\in\mathbb{D}}c_ig_i\right\},\\
    \mathcal{L}_1(\mathbb{D})&:=\big\{f\in X: \|f\|_{\mathcal{L}_1(\mathbb{D})}<\infty\big\}.
\end{align*}
Here $\|f\|_{\mathcal{L}_1(\mathbb{D})}$ is sometimes known as the variation of $f$ with respect to $\mathbb{D}$ in the classical literature (see Ref.~\citen{KurkovSanguineti2001}). For the OGA \ref{OGA} with $f\in\mathcal{L}_1(\mathbb{D})$ and {$\|\mathbb{D}\|:=\sup_{g\in\mathbb{D}}\|g\|\leq1$}, the classical error estimate (see Refs.~\citen{DeVoreTemlyakov1996} and \citen{BarronCohenDahmenDeVore2008}) reads 
\begin{equation}\label{OGArateclassical}
    \|f-f_n\|\leq \|f\|_{\mathcal{L}_1(\mathbb{D})}(n+1)^{-\frac{1}{2}}.
\end{equation}
Recently, the work \cite{JonathanXu2022} developed a rate-optimal convergence estimate of the OGA based on the entropy numbers of ${\rm co}(\mathbb{D})$. For example, convergence of the OGA based on a sufficiently regular dictionary like $\mathbb{P}_k^d$ would be much faster than $\mathcal{O}(n^{-\frac{1}{2}})$.
\begin{theorem}[Theorem 1 from Ref.~\citen{JonathanXu2022}]\label{rateOGAthm}
Let $f\in\mathcal{L}_1(\mathbb{D})$ and $t>0$. For Algorithm \ref{OGA} it holds that
\begin{equation*}
    \varepsilon_n({\rm co}(\mathbb{D}))\lesssim n^{-\frac{1}{2}-t}\Longrightarrow\|f-f_n\|\lesssim \|f\|_{\mathcal{L}_1(\mathbb{D})}n^{-\frac{1}{2}-t}.
\end{equation*}
\end{theorem}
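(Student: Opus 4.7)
My proof plan is as follows. First, normalize so that $\|f\|_{\mathcal{L}_1(\mathbb{D})}=1$, placing $f\in\overline{{\rm co}(\mathbb{D})}$. Let $P_k$ denote the orthogonal projection onto $X_k$, set $r_k=f-f_k$, and let $v_k=g_k-P_{k-1}g_k$ be the Gram-Schmidt orthogonalization of the OGA iterates. Since $r_{k-1}\perp X_{k-1}$, one has $\langle v_k,r_{k-1}\rangle=\langle g_k,r_{k-1}\rangle=:\pm a_k$, and the orthogonal projection in \textsf{Step 2} of Algorithm \ref{OGA} yields the energy identity
\[
\|r_k\|^2=\|r_{k-1}\|^2-a_k^2/\|v_k\|^2,\qquad \|v_k\|\le\|g_k\|\le 1,
\]
so $\|r_k\|$ is monotonically decreasing.

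The core of the argument is to squeeze the product $\prod_{k=1}^n\|v_k\|$ between two bounds pointing in opposite directions. For the upper bound, apply Lemma \ref{mainlemma} with $K=\overline{{\rm co}(\mathbb{D})}$ to the iterates $g_1,\dots,g_n\in\mathbb{D}\subset K$:
\[
\Big(\prod_{k=1}^n\|v_k\|\Big)^{1/n}\le(n!V_n)^{1/n}\varepsilon_n({\rm co}(\mathbb{D}))\lesssim\sqrt{n}\,\varepsilon_n({\rm co}(\mathbb{D})).
\]
For the lower bound, the greedy rule and $f\in\overline{{\rm co}(\mathbb{D})}$ give Jones' estimate $a_k=\max_{g\in\mathbb{D}}|\langle g,r_{k-1}\rangle|\ge|\langle f,r_{k-1}\rangle|=\|r_{k-1}\|^2$, while Cauchy-Schwarz yields $a_k\le\|v_k\|\|r_{k-1}\|$; consequently $\|v_k\|\ge\|r_{k-1}\|$. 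Multiplying and using the monotonicity of $\|r_k\|$ to replace each $\|r_{k-1}\|$ by the smallest term gives the preliminary direct estimate
\[
\|r_n\|\le\|r_{n-1}\|\lesssim\sqrt{n}\,\varepsilon_n({\rm co}(\mathbb{D})).
\]

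Under the hypothesis $\varepsilon_n({\rm co}(\mathbb{D}))\lesssim n^{-1/2-t}$, this preliminary estimate yields only $\|r_n\|\lesssim n^{-t}$, losing a factor of $\sqrt{n}$ relative to the claim. Shedding this factor is the main obstacle; it cannot be done by a single application of the volume bound, since the $\sqrt{n}$ is genuinely tight in the analogous RBM setting (Corollary \ref{sigmaepsiloncor} and the proposition following it). To recover the asserted rate I would bootstrap across dyadic windows: starting from the classical Jones-Barron rate $\|r_n\|\lesssim n^{-1/2}$, treat $r_m$ as a new target against the projected dictionary $(I-P_m)\mathbb{D}\subset X_m^\perp$ (whose entropy is dominated by that of $\mathbb{D}$ since $I-P_m$ is $1$-Lipschitz) and reapply the entropy-driven volume argument to the next block of $m$ iterations, bounding $\|r_{2m}\|$ in terms of $\|r_m\|$ and $\varepsilon_m$. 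Iterating this doubling step across scales $m=2^j$, the polynomial decay hypothesis on $\varepsilon_n$ compounds to absorb the parasitic $\sqrt{n}$ and delivers the sharp conclusion $\|f-f_n\|\lesssim\|f\|_{\mathcal{L}_1(\mathbb{D})}n^{-1/2-t}$.
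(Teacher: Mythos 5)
Your setup is sound up to the preliminary bound $\|r_n\|\lesssim\sqrt{n}\,\varepsilon_n({\rm co}(\mathbb{D}))$ (the energy identity, the duality estimate $a_k=\max_{g\in\mathbb{D}}|\langle g,r_{k-1}\rangle|\geq\|r_{k-1}\|^2$, Cauchy--Schwarz, and Lemma \ref{mainlemma} are all used correctly), but the dyadic bootstrap that is supposed to shed the parasitic $\sqrt{n}$ has a genuine gap: as described, the restarted block argument cannot produce a bound on $\|r_{2m}\|$ in which $\|r_m\|_X$ appears multiplicatively. When you treat $r_m$ as a new target for the projected dictionary $(I-P_m)\mathbb{D}$, the only quantity your squeeze sees is the variation norm $\|r_m\|_{\mathcal{L}_1((I-P_m)\mathbb{D})}$, which is bounded by $\|f\|_{\mathcal{L}_1(\mathbb{D})}=1$ but does not shrink with $m$; the smallness of $\|r_m\|$ in $X$ gives no control on its $\mathcal{L}_1$ norm. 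Concretely, repeating your two-sided squeeze on the block $m<k\leq 2m$ gives $\|r_{2m}\|^m\leq\prod_{k=m+1}^{2m}\|v_k\|\leq m!\,V_m\,\varepsilon_m({\rm co}(\mathbb{D}))^m$, i.e.\ exactly $\|r_{2m}\|\lesssim\sqrt{m}\,\varepsilon_m$ again, with no factor of $\|r_m\|$, so iterating over dyadic scales compounds nothing and you remain at the rate $n^{-t}$. To make a doubling step work you need the summed energy identity over the block, $\|r_m\|^2-\|r_{2m}\|^2=\sum_{k=m+1}^{2m}a_k^2/\|v_k\|^2\geq\|r_{2m}\|^4\sum_{k=m+1}^{2m}\|v_k\|^{-2}$, followed by AM--GM on $\sum_k\|v_k\|^{-2}$ and the block volume bound; this yields $\|r_{2m}\|^2\lesssim\varepsilon_m\|r_m\|$, and only with such a multiplicative recursion does the dyadic iteration converge to the exponent $\frac{1}{2}+t$. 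That harmonic-sum step is the missing idea: your per-step inequality $\|v_k\|\geq\|r_{k-1}\|$ combined with monotonicity discards precisely the information it encodes.

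This is also where your route diverges from the paper. The paper does not prove the rate statement by iteration at all: it proves the stronger, non-asymptotic Theorem \ref{OGAthm}, $\|f-f_n\|\leq\frac{(n!V_n)^{1/n}}{\sqrt{n}}\|f\|_{\mathcal{L}_1(\mathbb{D})}\varepsilon_n({\rm co}(\mathbb{D}))$, by setting $\rho_n=\|r_n\|^2/\|f\|^2_{\mathcal{L}_1(\mathbb{D})}$ and $b_n=\|v_n\|^{-2}$, deriving $\rho_n\leq\rho_{n-1}(1-b_n\rho_{n-1})$, invoking Lemma \ref{inductionlemma} to get $\rho_n\leq(1+b_1+\cdots+b_n)^{-1}$, and then applying AM--GM together with a single use of Lemma \ref{mainlemma}; the factor $n$ from the harmonic sum cancels $(n!V_n)^{2/n}\eqsim n$, so no decay hypothesis, restart, or dyadic iteration is needed, and Theorem \ref{rateOGAthm} follows immediately. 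If you insert the block energy-identity estimate above, your bootstrap can indeed be completed and recovers the claimed rate under the polynomial-decay hypothesis, but as written the key doubling inequality is unsubstantiated and the mechanism you propose for it does not deliver it.
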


\subsection{Direct comparison estimate}
Although the OGA and the greedy-type RBM are not closely related research areas, we show that the main lemma \ref{mainlemma} used in the analysis of greedy RBMs can be used to derive a new type of clean and transparent \emph{direct} comparison between $\|f-f_n\|$ and $\varepsilon_n({\rm co}(\mathbb{D}))$. It turns out that our analysis greatly simplifies the argument in Ref.~\citen{JonathanXu2022} and leads to an improvement of Theorem \ref{rateOGAthm}. To this end, we need the following upper bound for recursively related sequences. {We note that an equivalent result has previously appeared in a slightly different form in the literature, see Lemma 3.1 in Ref.~\citen{Temlyakov2000} or Lemma 3.4 in Ref.~\citen{DeVoreTemlyakov1996}. For the readers convenience we provide the complete proof here}. 
\begin{lemma}\label{inductionlemma}
Let $\{a_n\}_{n\geq0}$ and $\{b\}_{n\geq1}$ be non-negative sequences satisfying
\begin{equation}\label{inductionrecurrence}
        a_n\leq a_{n-1}(1-b_na_{n-1})\text{ for }n\geq1.
\end{equation}
and $b_0=1/a_0$. Then we have
    \begin{equation*}
        a_n\leq\frac{1}{b_0+b_1+\cdots+b_n}.
    \end{equation*}
\end{lemma}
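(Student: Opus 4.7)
The plan is to prove this by reducing the multiplicative recurrence on $a_n$ to an additive recurrence on the reciprocals $c_n := 1/a_n$ and then telescoping. The key algebraic gadget is the elementary inequality $\frac{1}{1-x} \geq 1+x$ valid for $x \in [0,1)$, which converts the loss factor $1 - b_n a_{n-1}$ on the right-hand side of \eqref{inductionrecurrence} into an additive increment of $b_n$ after inversion.

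More concretely, I would first dispose of the degenerate cases. If ever $a_{N-1} = 0$, then \eqref{inductionrecurrence} forces $a_N \leq 0$, hence $a_N = 0$ by non-negativity, and inductively $a_n = 0$ for all $n \geq N$, so the conclusion is trivial. Similarly, if $b_n a_{n-1} \geq 1$ at some step, then the right-hand side of \eqref{inductionrecurrence} is non-positive, forcing $a_n = 0$ and reducing to the previous case. Outside of these situations we may assume $a_{n-1} > 0$ and $b_n a_{n-1} < 1$ throughout.

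In the non-degenerate regime, divide \eqref{inductionrecurrence} by $a_n a_{n-1}(1-b_n a_{n-1}) > 0$, or equivalently reciprocate both sides, to obtain
\begin{equation*}
\frac{1}{a_n} \;\geq\; \frac{1}{a_{n-1}}\cdot\frac{1}{1-b_n a_{n-1}} \;\geq\; \frac{1}{a_{n-1}}\bigl(1 + b_n a_{n-1}\bigr) \;=\; \frac{1}{a_{n-1}} + b_n,
\end{equation*}
where the second inequality is the elementary bound mentioned above. Telescoping this over $k = 1, \ldots, n$ and invoking the initial condition $1/a_0 = b_0$ yields $1/a_n \geq b_0 + b_1 + \cdots + b_n$, which rearranges to the claimed bound.

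I do not expect any real obstacle here; the only subtle point is making sure the reciprocation step is legal, which is exactly what the case distinction in the second paragraph is designed to handle. The argument is a direct verification and does not appeal to any earlier structural result in the paper.
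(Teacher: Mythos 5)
Your proof is correct, and it takes a genuinely different route from the paper. The paper argues by induction on $n$: it first notes that $\{a_n\}$ is decreasing, assumes $a_{n-1}\leq (b_0+\cdots+b_{n-1})^{-1}$, and then splits into two cases according to whether $a_{n-1}\leq (b_0+\cdots+b_n)^{-1}$ (where monotonicity finishes immediately) or $a_{n-1}>(b_0+\cdots+b_n)^{-1}$ (where plugging both bounds on $a_{n-1}$ into the recurrence gives exactly $a_n\leq(b_0+\cdots+b_n)^{-1}$). You instead pass to reciprocals, use $\tfrac{1}{1-x}\geq 1+x$ to turn the multiplicative loss factor into the additive increment $1/a_n\geq 1/a_{n-1}+b_n$, and telescope. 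Your version is arguably more transparent — the conclusion drops out of a single telescoping sum — but it needs the degenerate-case bookkeeping you supply (some $a_k=0$, or $b_ka_{k-1}\geq 1$ forcing $a_k=0$), since the reciprocation is otherwise illegal; strictly speaking you also want to note that $a_n>0$ itself in the non-degenerate regime (if $a_n=0$ the bound is trivial), which your case analysis covers implicitly. The paper's induction never divides by any $a_k$, so zero values and the borderline $b_na_{n-1}\geq1$ situation require no separate treatment, at the cost of a less intuitive case split. Both arguments are elementary and of comparable length, and your telescoping version is essentially the classical DeVore--Temlyakov-style proof the paper alludes to when citing earlier equivalent formulations of this lemma.
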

\begin{proof}
We prove the lemma by induction. First we note $\{a_n\}_{n\geq0}$ is a decreasing sequence. By definition we have $a_0\leq1/b_0$ for $n=0$. For the time being assume \begin{equation}\label{inductionassumption}
    a_{n-1}\leq\frac{1}{b_0+\cdots+b_{n-1}}.
\end{equation} 
If $a_{n-1}\leq\frac{1}{b_0+\cdots+b_{n}}$, then the monotonicity of $\{a_n\}_{n\geq1}$ implies $$a_n\leq a_{n-1}\leq\frac{1}{b_0+\cdots+b_{n}}.$$
If $a_{n-1}>\frac{1}{b_0+\cdots+b_{n}}$, then by \eqref{inductionrecurrence} and the induction assumption \eqref{inductionassumption}, we obtain
\begin{equation*}
\begin{aligned}
        a_n&\leq \frac{1}{b_0+\cdots+b_{n-1}}\left(1-\frac{b_n}{b_0+\cdots+b_{n}}\right)\\
        &=\frac{1}{b_0+\cdots+b_{n}}.
        \end{aligned}
        \end{equation*}
Therefore the induction is complete and Lemma \ref{inductionlemma} is true. 
\end{proof} 
\begin{theorem}\label{OGAthm}
For $f\in\mathcal{L}_1(\mathbb{D})$ and Algorithm \ref{OGA}
we have
\begin{equation}\label{OGAestimate}
    \|f-f_n\|\leq \frac{(n!V_n)^\frac{1}{n}}{\sqrt{n}}\|f\|_{\mathcal{L}_1(\mathbb{D})}\varepsilon_n({\rm co}(\mathbb{D})).
\end{equation}
\end{theorem}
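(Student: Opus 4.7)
The plan is to reduce Theorem \ref{OGAthm} to an application of Lemma \ref{mainlemma} via the classical OGA energy-decay recursion, closed by the recursive estimate in Lemma \ref{inductionlemma} and an arithmetic--geometric mean passage. I would proceed as follows.

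First I would derive the standard OGA one-step recursion. Since $f_n=P_nf$ is the best approximation of $f$ from $X_n=\operatorname{span}\{g_1,\dots,g_n\}$, for every scalar $c$ we have
\begin{equation*}
\|f-f_n\|^2\leq\|f-f_{n-1}-c(g_n-P_{n-1}g_n)\|^2,
\end{equation*}
and optimizing over $c$, together with the orthogonality $(f-f_{n-1})\perp X_{n-1}$, produces
\begin{equation*}
\|f-f_n\|^2\leq\|f-f_{n-1}\|^2-\frac{|\langle f-f_{n-1},g_n\rangle|^2}{\|g_n-P_{n-1}g_n\|^2}.
\end{equation*}
Writing $f=\sum_i c_ih_i$ with $h_i\in\mathbb{D}$ and $\sum_i|c_i|$ arbitrarily close to $\|f\|_{\mathcal{L}_1(\mathbb{D})}$, the orthogonality gives $\|f-f_{n-1}\|^2=\langle f-f_{n-1},f\rangle\leq\|f\|_{\mathcal{L}_1(\mathbb{D})}\max_i|\langle f-f_{n-1},h_i\rangle|$, and the greedy choice of $g_n$ upgrades this maximum to $|\langle f-f_{n-1},g_n\rangle|$. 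Setting $a_n=\|f-f_n\|^2$ and $b_n=\bigl(\|f\|_{\mathcal{L}_1(\mathbb{D})}^2\|g_n-P_{n-1}g_n\|^2\bigr)^{-1}$, the recursion reads $a_n\leq a_{n-1}(1-b_na_{n-1})$. (The degenerate case $g_n\in X_{n-1}$ forces $\langle f-f_{n-1},g\rangle=0$ for every $g\in\mathbb{D}$ by the greedy rule, hence $a_{n-1}=0$, so this case is trivial.)

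Next I would apply Lemma \ref{inductionlemma} with $b_0=1/a_0$, and discard the nonnegative term $b_0$ from the denominator to obtain
\begin{equation*}
a_n\leq\frac{1}{b_1+\cdots+b_n}=\frac{\|f\|_{\mathcal{L}_1(\mathbb{D})}^2}{\sum_{k=1}^n\|g_k-P_{k-1}g_k\|^{-2}}.
\end{equation*}
The AM--GM inequality applied to the reciprocals then yields
\begin{equation*}
\sum_{k=1}^n\|g_k-P_{k-1}g_k\|^{-2}\geq\frac{n}{\bigl(\prod_{k=1}^n\|g_k-P_{k-1}g_k\|\bigr)^{2/n}}.
\end{equation*}

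Finally, I would invoke Lemma \ref{mainlemma} with the choice $v_k=g_k\in\mathbb{D}$, which is exactly the setting needed since $P_k$ is the orthogonal projection onto $\operatorname{span}\{g_1,\dots,g_k\}$. This gives $\bigl(\prod_{k=1}^n\|g_k-P_{k-1}g_k\|\bigr)^{1/n}\leq(n!V_n)^{1/n}\varepsilon_n(\mathrm{co}(\mathbb{D}))$, and plugging everything into the bound above yields
\begin{equation*}
a_n\leq\frac{\|f\|_{\mathcal{L}_1(\mathbb{D})}^2(n!V_n)^{2/n}\varepsilon_n(\mathrm{co}(\mathbb{D}))^2}{n},
\end{equation*}
which is \eqref{OGAestimate} after taking square roots. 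The main conceptual point, and the part that requires the most care, is bridging the OGA analysis with Lemma \ref{mainlemma}: the energy recursion naturally accumulates the harmonic sum $\sum 1/\|g_k-P_{k-1}g_k\|^2$, whereas Lemma \ref{mainlemma} only controls the geometric mean $\bigl(\prod\|g_k-P_{k-1}g_k\|\bigr)^{1/n}$. The AM--GM step is exactly what reconciles these two quantities, and it is this reconciliation that both simplifies the proof of Ref.~\citen{JonathanXu2022} and produces the clean direct comparison \eqref{OGAestimate}.
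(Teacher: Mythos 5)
Your proposal is correct and follows essentially the same route as the paper's proof: the same OGA energy recursion combined with the $\mathcal{L}_1(\mathbb{D})$ duality bound, closed by Lemma \ref{inductionlemma}, the AM--GM step, and Lemma \ref{mainlemma} applied with $v_k=g_k$. The only differences are cosmetic, namely your unnormalized $a_n$ with $\|f\|_{\mathcal{L}_1(\mathbb{D})}^2$ absorbed into $b_n$, and discarding the $b_0$ term at the start rather than at the end.
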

\begin{proof}
Let {$r_n=f-f_n$} in Algorithm \ref{OGA}. Following the analysis in Ref.~\citen{JonathanXu2022} {(see also the analysis in Section 3 of Ref.~\citen{Gao2017})}, we have
\begin{equation}\label{rnnm1}
\begin{aligned}
\|r_n\|^2&\leq\left\|r_{n-1}-\frac{\langle r_{n-1},g_n-P_{n-1}g_n\rangle}{\|g_n-P_{n-1}g_n\|^2}(g_n-P_{n-1}g_n)\right\|^2\\
&=\|r_{n-1}\|^2-\frac{|\langle r_{n-1},g_n-P_{n-1}g_n\rangle|^2}{\|g_n-P_{n-1}g_n\|^2}.
 \end{aligned}
\end{equation}
We remark $\|g_n-P_{n-1}g_n\|\neq0$ otherwise $g_n\in X_{n-1}={\rm span}\{g_1,\ldots,g_{n-1}\}$ and $\arg\max_{g\in\mathbb{D}}|\langle r_{n-1},g\rangle|=|\langle r_{n-1},g_n\rangle|=0$, which implies $r_{n-1}=0$ and termination of the OGA. On the other hand, using $r_{n-1}\perp X_{n-1}$ we have
\begin{equation}\label{rnnm2}
    \begin{aligned}
        \|r_{n-1}\|^2&=\langle f,r_{n-1}\rangle\leq\|f\|_{\mathcal{L}_1(\mathbb{D})}|\langle r_{n-1},g_n\rangle|\\
        &=\|f\|_{\mathcal{L}_1(\mathbb{D})}|\langle r_{n-1},g_n-P_{n-1}g_n\rangle|.
    \end{aligned}
\end{equation}
Therefore combining \eqref{rnnm1} and \eqref{rnnm2} leads to 
\begin{equation}\label{rnnm3}
    \begin{aligned}
        \|r_n\|^2\leq\|r_{n-1}\|^2-\frac{\|r_{n-1}\|^4}{\|f\|_{\mathcal{L}_1(\mathbb{D})}^2\|g_n-P_{n-1}g_n\|^2}.
    \end{aligned}
\end{equation}
With the notation 
\begin{align*}
    &a_n=\|f-f_n\|^2/\|f\|^2_{\mathcal{L}_1(\mathbb{D})},\\
    &b_n=\|g_n-P_{n-1}g_n\|^{-2},
\end{align*}
the inequality \eqref{rnnm3} reduces to the 
recurrence relation
\begin{equation}\label{recurrence}
    a_n\leq a_{n-1}(1-b_na_{n-1}).
\end{equation}
By the definition of $\|f\|_{\mathcal{L}_1(\mathbb{D})}$ we have $a_0\leq1$. It then follows from \eqref{recurrence} and Lemma \ref{inductionlemma} that
\begin{equation}\label{induction}
a_n\leq\frac{1}{1+b_1+\cdots+b_n}.
\end{equation}
Using \eqref{induction}, a mean value inequality and the key Lemma \ref{mainlemma}, we obtain
\begin{equation*}
            \begin{aligned}
        a_n&\leq\frac{1}{1+n(b_1\cdots b_n)^\frac{1}{n}}\\
        &=\frac{1}{1+n\big(\prod_{k=1}^n\|g_k-P_{k-1}g_k\|\big)^{-\frac{2}{n}}}\\
    &\leq\frac{1}{1+n(nV_n)^{-\frac{2}{n}}\varepsilon_n({\rm co}(\mathbb{D}))^{-2}}\\
    &\leq\frac{(nV_n)^\frac{2}{n}}{n}\varepsilon_n({\rm co}(\mathbb{D}))^2.
\end{aligned}
\end{equation*}
The proof is complete.
\end{proof}

We remark that the classical Theorem \ref{rateOGAthm} is valid under an asymptotic assumption while Theorem \ref{OGAthm} is a non-asymptotic and unconditional estimate. In addition, the constant in the upper bound of \eqref{OGAestimate} as $n\to\infty$ behaves like
\[
\lim_{n\to\infty}\frac{(n!V_n)^\frac{1}{n}}{\sqrt{n}}=\sqrt{\frac{2\pi}{\text{e}}}.
\]
Therefore Theorem \ref{OGAthm} yields
\begin{equation*}
    \|f-f_n\|\lesssim\|f\|_{\mathcal{L}_1(\mathbb{D})} \varepsilon_n({\rm co}(\mathbb{D})),
\end{equation*}
which is indeed stronger than Theorem \ref{rateOGAthm}.

\subsection{Best $n$-term approximation}\label{subsecnterm}
{Consider the set of $n$-sparse elements 
$$\Sigma_n(\mathbb{D})=\Big\{\sum_{i=1}^nc_i\tilde{g}_i: c_i\in\mathbb{R},~\tilde{g}_i\in\mathbb{D},~i=1,\ldots,n\Big\}.$$ 
It is natural to compare the error of greedy algorithms with the best approximation error $$E_n(f,\mathbb{D})=\inf_{g\in\Sigma_n(\mathbb{D})}\|f-g\|$$
by $n$-sparse elements. When $\mathbb{D}$ is an orthonormal set in $X$, the equality $\|f-f_n\|=E_n(f,\mathbb{D})$ automatically holds (see Ref.~\citen{Temlyakov2008}). In general, 
such convergence analysis using $E_n(f,\mathbb{D})$ often requires near orthogonality assumption on $\mathbb{D}$.
For example, on a $M$-coherent dictionary $\mathbb{D}$, the following Lebesgue inequality is true (see Section 2.6 of Ref.~\citen{Temlyakov2008}):
\[
\|f-f_{\lfloor n\log n\rfloor}\|\leq c_1E_n(f,\mathbb{D}),
\]
for $n\leq c_2M^{-\frac{2}{3}}$
with $c_1$, $c_2$ being explicit constants.
We also refer to Ref.~\citen{Temlyakov2018} for Lebesgue-type inequality of generalized OGAs in Banach spaces.
On the other hand, the entropy-based estimate \eqref{OGAestimate} applies to any normalized dictionary in Hilbert spaces. 

In addition, we note that Theorem \ref{OGAestimate} yields the following direct comparison between $E_n(f,\mathbb{D})$ and $\varepsilon_n({\rm co}(\mathbb{D}))$:
\begin{subequations}
\begin{align}
&E_n(f,\mathbb{D})\leq\frac{(n!V_n)^\frac{1}{n}}{\sqrt{n}}\|f\|_{\mathcal{L}_1(\mathbb{D})}\varepsilon_n({\rm co}(\mathbb{D})),\\
&E_n(\mathbb{D}):=\sup_{f\in{\rm co}(\mathbb{D})}E_n(f,\mathbb{D})\leq\frac{(n!V_n)^\frac{1}{n}}{\sqrt{n}}\varepsilon_n({\rm co}(\mathbb{D})).\label{EnEpsilonn}
\end{align}
\end{subequations}
When the number $|\mathbb{D}|$ of elements in $\mathbb{D}$ is finite, an indirect comparison between $E_n(\mathbb{D})$ and $\varepsilon_n({\rm co}(\mathbb{D}))$ in the converse direction of \eqref{EnEpsilonn} can be found in Section 7.4 of Ref.~\citen{Temlyakov2018}. In particular, combining Theorem 7.4.3 of Ref.~\citen{Temlyakov2018} with Theorem \ref{OGAestimate}, we obtain the following novel comparison between the convergence rate of $E_n(\mathbb{D})$ and the error of OGAs whose target functions belonging to ${\rm co}(\mathbb{D})$.
\begin{corollary}\label{OGAntermrate}
Assume there exists a constant $s>0$ such that
\begin{equation*}
   E_n(\mathbb{D})\lesssim n^{-s},\quad n\leq|\mathbb{D}|,
\end{equation*}
where the cardinality $|\mathbb{D}|$ is finite.
Then for $f_n$ generated by the OGA (Algorithm \ref{OGA}) with $n\leq|\mathbb{D}|$, we have
\begin{equation*}
   \sup_{f\in{\rm co}(\mathbb{D})}\|f-f_n\|\leq C(s)(\log(2|\mathbb{D}|/n))^sn^{-s},
\end{equation*}
where $C(s)$ is a constant depending only on $s.$
\end{corollary}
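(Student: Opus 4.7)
The strategy is to pipeline two comparison estimates: first convert the hypothesized decay of $E_n(\mathbb{D})$ into a bound on $\varepsilon_n({\rm co}(\mathbb{D}))$ using the reverse comparison available for finite dictionaries, and then transfer that entropy bound to an OGA error estimate via Theorem \ref{OGAthm}.

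The first step is to invoke Theorem 7.4.3 of Ref.~\citen{Temlyakov2018}, which supplies exactly the missing converse direction of the inequality \eqref{EnEpsilonn} when the dictionary is finite. Under the standing hypothesis $E_n(\mathbb{D}) \lesssim n^{-s}$ for $n \leq |\mathbb{D}|$, this yields a bound of the form
\begin{equation*}
    \varepsilon_n({\rm co}(\mathbb{D})) \leq c(s)\,(\log(2|\mathbb{D}|/n))^s\,n^{-s}, \qquad 1 \leq n \leq |\mathbb{D}|,
\end{equation*}
for some constant $c(s)$ depending only on $s$. This is the only external ingredient in the argument; apart from citing it correctly and carrying along the logarithmic factor, no further work is required at this stage.

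Next I would specialize Theorem \ref{OGAthm} to target functions $f \in {\rm co}(\mathbb{D})$. Every such $f$ admits a representation $f = \sum_i c_i g_i$ with $g_i \in \mathbb{D}$ and $\sum_i |c_i| \leq 1$, hence by definition $\|f\|_{\mathcal{L}_1(\mathbb{D})} \leq 1$. Theorem \ref{OGAthm} then gives, uniformly in $f \in {\rm co}(\mathbb{D})$,
\begin{equation*}
    \|f - f_n\| \leq \frac{(n!V_n)^{1/n}}{\sqrt{n}}\,\varepsilon_n({\rm co}(\mathbb{D})).
\end{equation*}
The Stirling-type asymptotics recorded right after Theorem \ref{OGAthm} show that the prefactor $(n!V_n)^{1/n}/\sqrt{n}$ converges to $\sqrt{2\pi/\mathrm{e}}$ and is, in particular, bounded by an absolute constant for all $n \geq 1$.

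Combining the two displays and absorbing the absolute prefactor together with $c(s)$ into a single constant $C(s)$ depending only on $s$ produces $\sup_{f \in {\rm co}(\mathbb{D})}\|f - f_n\| \leq C(s)\,(\log(2|\mathbb{D}|/n))^s\,n^{-s}$, which is the claim. The only non-routine step is the invocation of the converse comparison from Ref.~\citen{Temlyakov2018}; everything else is direct substitution together with the prefactor bookkeeping that Theorem \ref{OGAthm} already packages for us.
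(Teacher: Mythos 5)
Your proposal is correct and follows exactly the route the paper intends: the paper's own justification for Corollary \ref{OGAntermrate} is precisely to combine the converse comparison of Theorem 7.4.3 in Ref.~\citen{Temlyakov2018} (entropy numbers of ${\rm co}(\mathbb{D})$ bounded by $C(s)(\log(2|\mathbb{D}|/n))^s n^{-s}$ under the $E_n(\mathbb{D})$ decay hypothesis for a finite dictionary) with the direct estimate \eqref{OGAestimate} of Theorem \ref{OGAthm}, using $\|f\|_{\mathcal{L}_1(\mathbb{D})}\leq 1$ for $f\in{\rm co}(\mathbb{D})$ and the boundedness of the prefactor $(n!V_n)^{1/n}/\sqrt{n}$. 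Your bookkeeping of the constants and the logarithmic factor matches the paper's statement, so no gaps remain.
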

We note that the above result is independent of the coherence property of $\mathbb{D}$. Moreover, Corollary \ref{OGAntermrate} is not a Lebesgue-type inequality but is in a similar fashion to the classical $n$-width based convergence estimate \eqref{rateRBMthm} for the RBM. It confirms that the OGA collectively achieves the best $n$-term approximation rate up to a logarithmic factor on finite dictionaries.
}

\subsection{General target functions}
In general, if $f$ is not contained in $\mathcal{L}_1(\mathbb{D})$, one can use the next corollary to estimate the error of the OGA.
\begin{corollary}\label{OGAcor}
    For all $f\in X$ and any $h\in\mathcal{L}_1(\mathbb{D})$, Algorithm \ref{OGA} satisfies
\begin{equation*}
    \|f-f_n\|^2\leq\|f-h\|^2+4\frac{(n!V_n)^\frac{2}{n}}{n}\|h\|^2_{\mathcal{L}_1(\mathbb{D})}\varepsilon_n({\rm co}(\mathbb{D}))^2.
\end{equation*}
\end{corollary}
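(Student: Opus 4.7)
The strategy is to adapt the proof of Theorem \ref{OGAthm} by tracking the shifted quantity $T_n := \|r_n\|^2 - \|f-h\|^2$ in place of $\|r_n\|^2$, where $r_n = f - f_n$. The key recurrence \eqref{rnnm1} from that proof uses only the defining property $f_n = P_n f$ and the greedy choice of $g_n$, so it remains available for general $f\in X$:
\[
\|r_n\|^2 \leq \|r_{n-1}\|^2 - \frac{|\langle r_{n-1}, g_n - P_{n-1}g_n\rangle|^2}{\|g_n - P_{n-1}g_n\|^2}.
\]
The only step in Theorem \ref{OGAthm} that required $f\in\mathcal{L}_1(\mathbb{D})$ was \eqref{rnnm2}, where $\|r_{n-1}\|^2 = \langle f,r_{n-1}\rangle$ was controlled via the $\mathcal{L}_1$-norm. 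Splitting $f = h + (f-h)$ and repeating that step yields
\[
\|r_{n-1}\|^2 \leq \|h\|_{\mathcal{L}_1(\mathbb{D})}\,|\langle r_{n-1}, g_n - P_{n-1}g_n\rangle| + \|f-h\|\,\|r_{n-1}\|,
\]
where the first term uses the greedy definition of $g_n$ together with $r_{n-1}\perp X_{n-1}$ exactly as in \eqref{rnnm2}, and the second is Cauchy-Schwarz.

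Next, with shorthand $A = \|h\|_{\mathcal{L}_1(\mathbb{D})}$, $B = \|f-h\|$, $R = \|r_{n-1}\|$, $U = \|g_n - P_{n-1}g_n\|$, $V = |\langle r_{n-1}, g_n - P_{n-1}g_n\rangle|$, the display above reads $R(R-B)\leq AV$. If $R\leq B$, the monotonicity $\|r_n\|\leq\|r_{n-1}\|$ (a consequence of $X_{n-1}\subset X_n$) gives $\|r_n\|^2\leq B^2$ and the target estimate holds trivially. Otherwise $V \geq R(R-B)/A$, and writing $T_n = \|r_n\|^2 - B^2 = (R-B)(R+B)$ together with the elementary inequality $R^2/(R+B)^2 \geq 1/4$ (valid for $R\geq B$) produces the shifted recurrence
\[
T_n \leq T_{n-1}\Bigl(1 - \frac{T_{n-1}}{4 A^2 U^2}\Bigr).
\]

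From here the endgame mirrors Theorem \ref{OGAthm}. Applying Lemma \ref{inductionlemma} with $a_n = T_n/(4A^2)$ and $b_n = 1/U^2$ (and $b_0 = 1/a_0$) gives $a_n \leq (b_1 + \cdots + b_n)^{-1}$. AM-GM together with Lemma \ref{mainlemma} applied to $g_1,\ldots,g_n\in\mathbb{D}$ then yields
\[
b_1 + \cdots + b_n \geq n\Bigl(\prod_{k=1}^n \|g_k - P_{k-1}g_k\|\Bigr)^{-2/n} \geq n\,(n!V_n)^{-2/n}\varepsilon_n({\rm co}(\mathbb{D}))^{-2},
\]
which unwinds to $T_n \leq 4A^2(n!V_n)^{2/n}\varepsilon_n({\rm co}(\mathbb{D}))^2/n$ and hence, via $\|r_n\|^2 = B^2 + T_n$, to the advertised bound.

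The main technical moment I anticipate is the algebraic manipulation producing the shifted recurrence for $T_n$: the constant $4$ in the conclusion arises precisely from $R^2/(R+B)^2\geq 1/4$ in the regime $R\geq B$, and one must check that this is tight enough to recover exactly the stated constant. A secondary bookkeeping point is that the $\mathcal{L}_1$-norm is defined by an infimum, so one applies the argument to a representation $h=\sum_i c_i g_i$ with $\sum_i|c_i|\leq\|h\|_{\mathcal{L}_1(\mathbb{D})}+\epsilon$ and then lets $\epsilon\to 0^+$.
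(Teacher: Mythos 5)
Your proposal is correct and follows essentially the same route as the paper: the same splitting $\|r_{n-1}\|^2=\langle h,r_{n-1}\rangle+\langle f-h,r_{n-1}\rangle$, the same shifted quantity $(\|r_n\|^2-\|f-h\|^2)/(4\|h\|_{\mathcal{L}_1(\mathbb{D})}^2)$ satisfying the recurrence of Lemma \ref{inductionlemma}, and the same endgame via AM--GM and Lemma \ref{mainlemma}. The only (immaterial) difference is how the factor $4$ appears: the paper uses the mean value (Young) inequality $\|f-h\|\|r_{n-1}\|\leq\tfrac12(\|f-h\|^2+\|r_{n-1}\|^2)$, while you use the case split $R\leq B$ together with $R^2/(R+B)^2\geq 1/4$, which yields the identical recurrence and constant.
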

\begin{proof}
We use the same notation as in the proof of Theorem \ref{OGAthm}. We note \eqref{rnnm1} still holds and
\begin{equation*}
    \begin{aligned}
        \|r_{n-1}\|^2&=\langle h,r_{n-1}\rangle+\langle f-h,r_{n-1}\rangle\\
        &\leq\|h\|_{\mathcal{L}_1(\mathbb{D})}|\langle r_{n-1},g_n-P_{n-1}g_n\rangle|+\|f-h\|\|r_{n-1}\|.
    \end{aligned}
\end{equation*}
Then combining the above inequality and a
mean value inequality yields
\begin{equation}\label{rfh}
        \frac{1}{2}\big(\|r_{n-1}\|^2-\|f-h\|^2\big)\leq\|h\|_{\mathcal{L}_1(\mathbb{D})}|\langle r_{n-1},g_n-P_{n-1}g_n\rangle|.
\end{equation}
Let $c_n=(\|r_{n}\|^2-\|f-h\|^2)/(4\|h\|^2_{\mathcal{L}_1(\mathbb{D})})$. Using \eqref{rnnm1} and \eqref{rfh} we obtain
\begin{equation*}
    c_n\leq c_{n-1}(1-b_nc_{n-1}).
\end{equation*}
The rest of the proof is same as Theorem \ref{OGAthm}.
\end{proof}

For $t>0$ and $g\in X$ the $K$-functional $$K(t,g)=\inf_{h\in\mathcal{L}_1(\mathbb{D})}\left(\|g-h\|_X+t\|h\|_{\mathcal{L}_1(\mathbb{D})}\right)$$
measures how well the element of $X$ can be approximated by $\mathcal{L}_1(\mathbb{D})$ with an approximant of small $\mathcal{L}_1(\mathbb{D})$ norm (see Refs.~\citen{BerghLofstrom1976,DeVoreLorentz1993} and \citen{BrennerScott2008}). For the index pair $(\theta,\infty)$ with $\theta\in(0,1)$, the interpolation norm and space between $X$ and $\mathcal{L}_1(\mathbb{D})$  are 
\begin{align*}
    \|g\|_\theta&=\|g\|_{[X,\mathcal{L}_1(\mathbb{D})]_{\theta,\infty}}:=\sup_{0<t<\infty}t^{-\theta}K(t,g),\\
    X_\theta&=\big\{g\in X: \|g\|_\theta<\infty\big\},
\end{align*}
respectively.
As in Refs.~\citen{BarronCohenDahmenDeVore2008} and \citen{JonathanXu2022}, we also obtain an explicit and improved error estimate of the OGA  in terms of the interpolation norm $\|f\|_\theta$.
\begin{proposition}
    For all $f\in X_\theta$ and $\theta\in(0,1)$ Algorithm \ref{OGA} satisfies
\begin{equation*}
    \|f-f_n\|\leq2^\theta\frac{(n!V_n)^\frac{\theta}{n}}{n^\frac{\theta}{2}}\|f\|_\theta \varepsilon_n({\rm co}(\mathbb{D}))^\theta.
\end{equation*}
\end{proposition}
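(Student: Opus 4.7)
The plan is to derive the bound directly from Corollary \ref{OGAcor} by optimizing over the free parameter $h\in\mathcal{L}_1(\mathbb{D})$ and then invoking the definition of the interpolation norm $\|\cdot\|_\theta$. The key observation is that Corollary \ref{OGAcor} already holds for every $h\in\mathcal{L}_1(\mathbb{D})$, so the right-hand side is essentially a sum-of-squares version of what appears inside the $K$-functional $K(t,f)$.

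First I would take the square root of the inequality in Corollary \ref{OGAcor}. Using the elementary inequality $\sqrt{\alpha^2+\beta^2}\le \alpha+\beta$ for $\alpha,\beta\ge 0$, this gives
\begin{equation*}
\|f-f_n\|\le \|f-h\|+2\,\frac{(n!V_n)^{1/n}}{\sqrt{n}}\,\varepsilon_n({\rm co}(\mathbb{D}))\,\|h\|_{\mathcal{L}_1(\mathbb{D})}
\end{equation*}
for every $h\in\mathcal{L}_1(\mathbb{D})$. Setting $t_n:=2(n!V_n)^{1/n}n^{-1/2}\varepsilon_n({\rm co}(\mathbb{D}))$, the right-hand side is exactly $\|f-h\|+t_n\|h\|_{\mathcal{L}_1(\mathbb{D})}$.

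Next I would take the infimum over $h\in\mathcal{L}_1(\mathbb{D})$. By definition of the $K$-functional, this yields $\|f-f_n\|\le K(t_n,f)$. From the definition of the interpolation norm, $K(t,f)\le t^\theta\|f\|_\theta$ for every $t>0$, so in particular
\begin{equation*}
\|f-f_n\|\le t_n^\theta\,\|f\|_\theta = 2^\theta\,\frac{(n!V_n)^{\theta/n}}{n^{\theta/2}}\,\varepsilon_n({\rm co}(\mathbb{D}))^\theta\,\|f\|_\theta,
\end{equation*}
which is the desired estimate.

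There is really no major obstacle here; this is a standard interpolation-type argument built on top of the core estimate in Corollary \ref{OGAcor}. The only mild care point is bookkeeping of constants: the factor $2^\theta$ arises from the $\sqrt{4}=2$ produced when converting the sum-of-squares bound in Corollary \ref{OGAcor} into a linear combination via $\sqrt{\alpha^2+\beta^2}\le\alpha+\beta$, and this factor must then be raised to the $\theta$-th power. One should also briefly check that the case when $f\notin\mathcal{L}_1(\mathbb{D})$ but $f\in X_\theta$ is handled correctly, but the $\inf$ over $h\in\mathcal{L}_1(\mathbb{D})$ inside $K(t,f)$ is finite for each $t>0$ precisely because $f\in X_\theta\subset X$ and $\mathcal{L}_1(\mathbb{D})$ is dense in the relevant sense for the interpolation space, so no issue arises.
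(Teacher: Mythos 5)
Your proposal is correct and follows essentially the same route as the paper: the paper likewise passes from Corollary \ref{OGAcor} to the linear bound $\|f-f_n\|\leq\|f-h\|+2\frac{(n!V_n)^{1/n}}{\sqrt{n}}\|h\|_{\mathcal{L}_1(\mathbb{D})}\varepsilon_n({\rm co}(\mathbb{D}))$, identifies the infimum over $h$ as $K(t,f)$ with $t=2\frac{(n!V_n)^{1/n}}{\sqrt{n}}\varepsilon_n({\rm co}(\mathbb{D}))$, and concludes via $K(t,f)\leq t^\theta\|f\|_\theta$. Your explicit mention of $\sqrt{\alpha^2+\beta^2}\leq\alpha+\beta$ just makes transparent a step the paper leaves implicit.
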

\begin{proof}
    By Corollary \ref{OGAcor}, we obtain for all $h\in\mathcal{L}_1(\mathbb{D})$
    \begin{equation*}
    \|f-f_n\|\leq\|f-h\|+2\frac{(n!V_n)^\frac{1}{n}}{\sqrt{n}}\|h\|_{\mathcal{L}_1(\mathbb{D})}\varepsilon_n({\rm co}(\mathbb{D})).
\end{equation*}
As a result, setting $t=2\frac{(n!V_n)^\frac{1}{n}}{\sqrt{n}}\varepsilon_n({\rm co}(\mathbb{D}))$, we have
\begin{equation*}
    \|f-f_n\|\leq K(t,f)\leq t^\theta\|f\|_\theta
\end{equation*}
and complete the proof.
\end{proof}

\section{Concluding Remarks}\label{secConclusion}
We have derived direct and optimal convergence estimates of the greedy-type RBM and the OGA based on Kolmogorov's entropy numbers. Constants in our upper bounds are explicit and simple. A future research direction is to extend our analysis to POD-greedy algorithms \cite{HaasdonkOhlberger2008,Haasdonk2013} for time-dependent parametric PDEs, which is a combination of greedy algorithms and POD for temporal compression of snapshots. In addition, the generalization of the OGA in Banach spaces is called the Chebyshev Greedy Algorithm (CGA) \cite{Temlyakov2001,Temlyakov2008,DereventsovTemlyakov2019}. It would also be interesting to investigate if our analysis in Section \ref{secOGA} and Lemma \ref{mainlemmaBanach} can be applied to the CGA.

\section*{Acknowledgements}
The authors would like to thank the two anonymous referees for constructive comments  that improve the quality of the manuscript. Subsection \ref{subsecnterm} is partially due to the bibliographic suggestion \cite{Temlyakov2018} of one referee. 

The work of Li was partially supported by the Fundamental Research Funds for the Central Universities 226-2023-00039. Siegel was supported by the National Science Foundation (DMS-2111387 and CCF-2205004).

\bibliographystyle{ws-m3as}

\begin{thebibliography}{10}
\newcommand{\enquote}[1]{#1}

\bibitem{ammar2006new}
A.~Ammar, B.~Mokdad, F.~Chinesta and R.~Keunings, \enquote{A new family of solvers for some classes of multidimensional partial differential equations encountered in kinetic theory modeling of complex fluids}, {\it Journal of non-Newtonian fluid Mechanics} \textbf{139} (2006) 153--176.

\bibitem{BabuskaRheinboldt1978}
I.~Babu\v{s}ka and W.~C. Rheinboldt, \enquote{Error estimates for adaptive finite element computations}, {\it SIAM J. Numer. Anal.} \textbf{15} (1978) 736--754.

\bibitem{BarraultMadayNguyen2004}
M.~Barrault, Y.~Maday, N.~C. Nguyen and A.~T. Patera, \enquote{An `empirical interpolation' method: application to efficient reduced-basis discretization of partial differential equations}, {\it C. R. Math. Acad. Sci. Paris} \textbf{339} (2004) 667--672.

\bibitem{BarronCohenDahmenDeVore2008}
A.~R. Barron, A.~Cohen, W.~Dahmen and R.~A. DeVore, \enquote{Approximation and learning by greedy algorithms}, {\it Ann. Statist.} \textbf{36} (2008) 64--94.

\bibitem{BennerGugercinWillcox2015}
P.~Benner, S.~Gugercin and K.~Willcox, \enquote{A survey of projection-based model reduction methods for parametric dynamical systems}, {\it SIAM Rev.} \textbf{57} (2015) 483--531.

\bibitem{BerghLofstrom1976}
J.~Bergh and J.~L\"{o}fstr\"{o}m, {\it Interpolation spaces. {A}n introduction}, Grundlehren der Mathematischen Wissenschaften, No. 223 (Springer-Verlag, Berlin-New York, 1976).

\bibitem{BinevCohenDahmenDeVore2011}
P.~Binev, A.~Cohen, W.~Dahmen, R.~DeVore, G.~Petrova and P.~Wojtaszczyk, \enquote{Convergence rates for greedy algorithms in reduced basis methods}, {\it SIAM J. Math. Anal.} \textbf{43} (2011) 1457--1472.

\bibitem{BonitoDeVoreNochetto2013}
A.~Bonito, R.~A. DeVore and R.~H. Nochetto, \enquote{Adaptive finite element methods for elliptic problems with discontinuous coefficients}, {\it SIAM J. Numer. Anal.} \textbf{51} (2013) 3106--3134.

\bibitem{BrennerScott2008}
S.~C. Brenner and L.~R. Scott, {\it The mathematical theory of finite element methods}, volume~35 of {\it Texts in Applied Mathematics, 15} (Springer, New York, 2008), 3 edition.

\bibitem{BuffaMaday2012}
A.~Buffa, Y.~Maday, A.~T. Patera, C.~Prud'homme and G.~Turinici, \enquote{{\it {A} priori} convergence of the greedy algorithm for the parametrized reduced basis method}, {\it ESAIM Math. Model. Numer. Anal.} \textbf{46} (2012) 595--603.

\bibitem{Carl1981}
B.~Carl, \enquote{Entropy numbers, {$s$}-numbers, and eigenvalue problems}, {\it J. Functional Analysis} \textbf{41} (1981) 290--306.

\bibitem{CarlKyreziPajor1999}
B.~Carl, I.~Kyrezi and A.~Pajor, \enquote{Metric entropy of convex hulls in {B}anach spaces}, {\it J. London Math. Soc. (2)} \textbf{60} (1999) 871--896.

\bibitem{CKNS2008}
J.~M. Cascon, C.~Kreuzer, R.~H. Nochetto and K.~G. Siebert, \enquote{Quasi-optimal convergence rate for an adaptive finite element method}, {\it SIAM J. Numer. Anal.} \textbf{46} (2008) 2524--2550.

\bibitem{CohenDahmenDeVore2001}
A.~Cohen, W.~Dahmen and R.~DeVore, \enquote{Adaptive wavelet methods for elliptic operator equations: convergence rates}, {\it Math. Comp.} \textbf{70} (2001) 27--75.

\bibitem{CohenDeVore2015}
A.~Cohen and R.~DeVore, \enquote{Approximation of high-dimensional parametric {PDE}s}, {\it Acta Numer.} \textbf{24} (2015) 1--159.

\bibitem{CohenDeVore2016}
A.~Cohen and R.~DeVore, \enquote{Kolmogorov widths under holomorphic mappings}, {\it IMA J. Numer. Anal.} \textbf{36} (2016) 1--12.

\bibitem{DavisMallat1997}
G.~Davis, S.~Mallat and M.~Avellaneda, \enquote{Adaptive greedy approximations}, {\it Constr. Approx.} \textbf{13} (1997) 57--98.

\bibitem{DereventsovTemlyakov2019}
A.~V. Dereventsov and V.~N. Temlyakov, \enquote{A unified way of analyzing some greedy algorithms}, {\it J. Funct. Anal.} \textbf{277} (2019) 108286, 30.

\bibitem{DeVorePetrova2013}
R.~DeVore, G.~Petrova and P.~Wojtaszczyk, \enquote{Greedy algorithms for reduced bases in {B}anach spaces}, {\it Constr. Approx.} \textbf{37} (2013) 455--466.

\bibitem{DeVore1998}
R.~A. DeVore, \enquote{Nonlinear approximation}, in {\it Acta numerica, 1998} (Cambridge Univ. Press, Cambridge, 1998), volume~7 of {\it Acta Numer.}, pp. 51--150.

\bibitem{DeVore2014}
R.~A. DeVore, \enquote{The theoretical foundation of reduced basis methods}, in {\it Model Reduction and Approximation (SIAM Computational Science \& Engineering)} (2014).

\bibitem{DeVoreLorentz1993}
R.~A. DeVore and G.~G. Lorentz, {\it Constructive approximation}, volume 303 of {\it Grundlehren der mathematischen Wissenschaften [Fundamental Principles of Mathematical Sciences]} (Springer-Verlag, Berlin, 1993).

\bibitem{DeVoreTemlyakov1996}
R.~A. DeVore and V.~N. Temlyakov, \enquote{Some remarks on greedy algorithms}, {\it Adv. Comput. Math.} \textbf{5} (1996) 173--187.

\bibitem{Dirichlet1839}
P.~G.~L. Dirichlet, \enquote{Sur une nouvelle m\'ethode pour la détermination des intégrales multiples}, {\it Journal de Mathématiques Pures et Appliquées} \textbf{4} (1839) 164--168.

\bibitem{EdelMaday2023}
P.~Edel and Y.~Maday, \enquote{Dual natural-norm a posteriori error estimators for reduced basis approximations to parametrized linear equations}, {\it Math. Models Methods Appl. Sci.} .

\bibitem{figueroa2012greedy}
L.~E. Figueroa and E.~S{\"u}li, \enquote{Greedy approximation of high-dimensional {O}rnstein--{U}hlenbeck operators}, {\it Foundations of Computational Mathematics} \textbf{12} (2012) 573--623.

\bibitem{Friedman2001}
J.~H. Friedman, \enquote{Greedy function approximation: a gradient boosting machine}, {\it Ann. Statist.} \textbf{29} (2001) 1189--1232.

\bibitem{friedman1981projection}
J.~H. Friedman and W.~Stuetzle, \enquote{Projection pursuit regression}, {\it Journal of the American statistical Association} \textbf{76} (1981) 817--823.

\bibitem{Gantumur2007}
T.~Gantumur, H.~Harbrecht and R.~Stevenson, \enquote{An optimal adaptive wavelet method without coarsening of the iterands}, {\it Math. Comp.} \textbf{76} (2007) 615--629.

\bibitem{Gao2001}
F.~Gao, \enquote{Metric entropy of convex hulls}, {\it Israel J. Math.} \textbf{123} (2001) 359--364.

\bibitem{Gao2017}
Y.~Gao, T.~Qian, L.-F. Cao and V.~Temlyakov, \enquote{Aspects of 2{D}-adaptive {F}ourier decompositions}, {\it arXiv preprint} ArXiv:1710.09277.

\bibitem{Grisvard1992}
P.~Grisvard, {\it Singularities in boundary value problems}, Research in Applied Mathematics, 22 (Springer-Verlag, Berlin, 1992).

\bibitem{Haasdonk2013}
B.~Haasdonk, \enquote{Convergence rates of the {POD}-greedy method}, {\it ESAIM Math. Model. Numer. Anal.} \textbf{47} (2013) 859--873.

\bibitem{HaasdonkOhlberger2008}
B.~Haasdonk and M.~Ohlberger, \enquote{Reduced basis method for finite volume approximations of parametrized linear evolution equations}, {\it M2AN Math. Model. Numer. Anal.} \textbf{42} (2008) 277--302.

\bibitem{HesthavenRozzaStamm2016}
J.~S. Hesthaven, G.~Rozza and B.~Stamm, {\it Certified reduced basis methods for parametrized partial differential equations}, SpringerBriefs in Mathematics (Springer, Cham; BCAM Basque Center for Applied Mathematics, Bilbao, 2016), bCAM SpringerBriefs.

\bibitem{jones1987conjecture}
L.~K. Jones, \enquote{On a conjecture of {H}uber concerning the convergence of projection pursuit regression}, {\it The Annals of statistics}  (1987) 880--882.

\bibitem{Jones1992}
L.~K. Jones, \enquote{A simple lemma on greedy approximation in {H}ilbert space and convergence rates for projection pursuit regression and neural network training}, {\it Ann. Statist.} \textbf{20} (1992) 608--613.

\bibitem{KolmogorovTihomirov1961}
A.~N. Kolmogorov and V.~M. Tihomirov, \enquote{{$\varepsilon $}-entropy and {$\varepsilon $}-capacity of sets in functional space}, {\it Amer. Math. Soc. Transl. (2)} \textbf{17} (1961) 277--364.

\bibitem{KurkovSanguineti2001}
V.~Kurkov\'{a} and M.~Sanguineti, \enquote{Bounds on rates of variable-basis and neural-network approximation}, {\it IEEE Trans. Inform. Theory} \textbf{47} (2001) 2659--2665.

\bibitem{le2009results}
C.~Le~Bris, T.~Lelievre and Y.~Maday, \enquote{Results and questions on a nonlinear approximation approach for solving high-dimensional partial differential equations}, {\it Constructive Approximation} \textbf{30} (2009) 621--651.

\bibitem{Li2019SINUM}
Y.~Li, \enquote{Some convergence and optimality results of adaptive mixed methods in finite element exterior calculus}, {\it SIAM J. Numer. Anal.} \textbf{57} (2019) 2019--2042.

\bibitem{Li2021MCOM}
Y.~Li, \enquote{Quasi-optimal adaptive mixed finite element methods for controlling natural norm errors}, {\it Math. Comp.} \textbf{90} (2021) 565--593.

\bibitem{livshits2009lower}
E.~D. Livshits, \enquote{Lower bounds for the rate of convergence of greedy algorithms}, {\it Izvestiya: Mathematics} \textbf{73} (2009) 1197.

\bibitem{Lorentz1962}
G.~G. Lorentz, \enquote{Metric entropy, widths, and superpositions of functions}, {\it Amer. Math. Monthly} \textbf{69} (1962) 469--485.

\bibitem{Lorentz1996}
G.~G. Lorentz, M.~v. Golitschek and Y.~Makovoz, {\it Constructive approximation}, volume 304 of {\it Grundlehren der mathematischen Wissenschaften [Fundamental Principles of Mathematical Sciences]} (Springer-Verlag, Berlin, 1996), advanced problems.

\bibitem{Maday2006}
Y.~Maday, \enquote{Reduced basis method for the rapid and reliable solution of partial differential equations}, in {\it International {C}ongress of {M}athematicians. {V}ol. {III}} (Eur. Math. Soc., Z\"{u}rich, 2006), pp. 1255--1270.

\bibitem{MadayPateraTurinici2002}
Y.~Maday, A.~T. Patera and G.~Turinici, \enquote{A priori convergence theory for reduced-basis approximations of single-parameter elliptic partial differential equations}, {\it J. Sci. Comput.} \textbf{17} (2002) 437--446.

\bibitem{MallatZhang1993}
S.~G. Mallat and Z.~Zhang, \enquote{Matching pursuits with time-frequency dictionaries}, {\it IEEE Transactions on Signal Processing} \textbf{41} (1993) 3397--3415.

\bibitem{MorinSiebertVeeser2008}
P.~Morin, K.~G. Siebert and A.~Veeser, \enquote{A basic convergence result for conforming adaptive finite elements}, {\it Math. Models Methods Appl. Sci.} \textbf{18} (2008) 707--737.

\bibitem{Pati1993}
Y.~Pati, R.~Rezaiifar and P.~Krishnaprasad, \enquote{Orthogonal matching pursuit: Recursive function approximation with applications to wavelet decomposition}, in {\it Proceedings of 27th Asilomar conference on signals, systems and computers} (1993), pp. 40--44. IEEE.

\bibitem{petrova2016rescaled}
G.~Petrova, \enquote{Rescaled pure greedy algorithm for {H}ilbert and {B}anach spaces}, {\it Applied and Computational Harmonic Analysis} \textbf{41} (2016) 852--866.

\bibitem{Quarteroni2016}
A.~Quarteroni, A.~Manzoni and F.~Negri, {\it Reduced basis methods for partial differential equations}, volume~92 of {\it Unitext} (Springer, Cham, 2016), an introduction, La Matematica per il 3+2.

\bibitem{RozzaHuynhPatera2008}
G.~Rozza, D.~B.~P. Huynh and A.~T. Patera, \enquote{Reduced basis approximation and a posteriori error estimation for affinely parametrized elliptic coercive partial differential equations: application to transport and continuum mechanics}, {\it Arch. Comput. Methods Eng.} \textbf{15} (2008) 229--275.

\bibitem{Schutt1984}
C.~Sch\"{u}tt, \enquote{Entropy numbers of diagonal operators between symmetric {B}anach spaces}, {\it J. Approx. Theory} \textbf{40} (1984) 121--128.

\bibitem{Siegel2023}
J.~W. Siegel, Q.~Hong, X.~Jin, W.~Hao and J.~Xu, \enquote{Greedy training algorithms for neural networks and applications to pdes}, {\it J. Comput. Phys.} \textbf{484} (2023) 112084.

\bibitem{JonathanXu2022}
J.~W. Siegel and J.~Xu, \enquote{Optimal convergence rates for the orthogonal greedy algorithm}, {\it IEEE Trans. Inform. Theory} \textbf{68} (2022) 3354--3361.

\bibitem{JonathanXuFoCM}
J.~W. Siegel and J.~Xu, \enquote{Sharp bounds on the approximation rates, metric entropy, and n-widths of shallow neural networks}, {\it Found. Comput. Math.} .

\bibitem{sil2004rate}
A.~Sil'nichenko, \enquote{Rate of convergence of greedy algorithms}, {\it Mathematical Notes} \textbf{76} (2004) 582--586.

\bibitem{Steinwart2000}
I.~Steinwart, \enquote{Entropy of {$C(K)$}-valued operators}, {\it J. Approx. Theory} \textbf{103} (2000) 302--328.

\bibitem{Temlyakov2011}
V.~Temlyakov, {\it Greedy approximation}, volume~20 of {\it Cambridge Monographs on Applied and Computational Mathematics} (Cambridge University Press, Cambridge, 2011).

\bibitem{Temlyakov2018}
V.~Temlyakov, {\it Multivariate approximation}, volume~32 of {\it Cambridge Monographs on Applied and Computational Mathematics} (Cambridge University Press, Cambridge, 2018).

\bibitem{Temlyakov2000}
V.~N. Temlyakov, \enquote{Weak greedy algorithms}, {\it Adv. Comput. Math.} \textbf{12} (2000) 213--227.

\bibitem{Temlyakov2001}
V.~N. Temlyakov, \enquote{Greedy algorithms in {B}anach spaces}, {\it Adv. Comput. Math.} \textbf{14} (2001) 277--292.

\bibitem{Temlyakov2008}
V.~N. Temlyakov, \enquote{Greedy approximation}, {\it Acta Numer.} \textbf{17} (2008) 235--409.

\bibitem{Wang2005}
X.~Wang, \enquote{Volumes of generalized unit balls}, {\it Mathematics Magazine} \textbf{78} (2005) 390--395.

\bibitem{Wojtaszscyk1991}
P.~Wojtaszczyk, {\it Banach spaces for analysts}, volume~25 of {\it Cambridge Studies in Advanced Mathematics} (Cambridge University Press, Cambridge, 1991).

\bibitem{Wojtaszczyk2015}
P.~Wojtaszczyk, \enquote{On greedy algorithm approximating {K}olmogorov widths in {B}anach spaces}, {\it J. Math. Anal. Appl.} \textbf{424} (2015) 685--695.

\end{thebibliography}

\end{document}